\newtheorem{theorem}{Theorem}[section]
\newtheorem{lemma}[theorem]{Lemma}
\newtheorem{corollary}[theorem]{Corollary}
\newtheorem{proposition}[theorem]{Proposition}
\newtheorem{claim}{Claim}[theorem]
\newtheorem*{main}{Main Theorem}
\newtheorem*{do_thm}{Do's Theorem}
\newtheorem*{beck}{Beck's Theorem for Hyperplanes}
\newtheorem*{dirac}{Weak Dirac Theorem}
\newcommand{\del}{\setminus}
\newcommand{\con}{/}
\DeclareMathOperator{\cl}{cl}
\newcommand{\cF}{\mathcal{F}}
\newcommand{\cH}{\mathcal{H}}
\newcommand{\cP}{\mathcal{P}}
\newcommand{\cI}{\mathcal{I}}
\title[Characterizing real-representable matroids with large]{Characterizing real-representable matroids with large average hyperplane-size}
\author{Rutger Campbell}
\address{Discrete Mathematics Group, Institute for Basic Science (IBS), Daejeon, Republic of Korea} 
\author{Matthew E. Kroeker}
\address{Institute of Discrete Mathematics and Algebra, Technische Universit\"at Bergakademie Freiberg, Freiberg, Germany}
\author{Ben Lund}
\address{Institute of Mathematics and Interdisciplinary Studies, Xidian University, Xi’an, 710071, China.}
\thanks{Much of the research for this paper took place while the second author was hosted by the Discrete Mathematics Group (DIMAG), Institute for Basic Science (IBS) in Daejeon, Republic of Korea, in January and July-August 2024. M. Kroeker was supported by an Ontario Graduate Scholarship. R. Campbell and B. Lund were supported by the Institute for Basic Science (IBS-R029-C1).}
\begin{document}

\begin{abstract}
Generalizing a theorem of the first two authors and Geelen for planes, we show that, for a real-representable matroid $M$, either the average hyperplane-size in $M$ is at most a constant depending only on its rank, or each hyperplane of $M$ contains one of a set of at most $r(M)-2$ lines. Additionally, in the latter case, the ground set of $M$ has a partition $(E_{1}, E_{2})$, where $E_{1}$ can be covered by few flats of relatively low rank and $|E_{2}|$ is bounded. These results extend to complex-representable and orientable matroids. Finally, we formulate a high-dimensional generalization of a classic problem of Motzkin, Gr\"unbaum, Erd\H{o}s and Purdy on sets of red and blue points in the plane with no monochromatic blue line.
We show that the solution to this problem gives a tight upper bound on $|E_{2}|$. We also discuss this high-dimensional problem in its own right, and prove some initial results. 
\end{abstract}

\maketitle

\section{Introduction}

In 1943, Melchior \cite{Melchior} proved that, for a real-representable matroid of rank at least three, the average size of a line is strictly less than three. 
Here the {\it{size}} of a line (or flat of any rank) is its number of points. 
The direct sum of a point and a line demonstrates that Melchior's bound is best-possible. 
For real-representable matroids of higher rank, the average size of a hyperplane need not be bounded. For example, the average plane-size in the direct sum of two lines can be arbitrarily large, since every plane will contain one of the two large lines. However, for planes, this is essentially the only thing that can go wrong, as recently proved by the first two authors with Geelen:

\begin{theorem}[\cite{CGK}]\label{planes}
In a real-representable matroid of rank at least four which is not the direct sum of two lines, the average plane-size is at most an absolute constant. 
\end{theorem}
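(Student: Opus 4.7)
The plan is to prove the contrapositive by combining a Beck-type incidence argument with a structural analysis that exposes the direct sum of two lines as the only obstruction. I would first reduce to the case that $M$ is simple and of rank exactly $4$: simplification is harmless, and for $r(M) > 4$ one truncates $M$ down to rank $4$. The hypothesis survives the truncation, because if the rank-$4$ truncation of $M$ were a direct sum of two lines $L_{1} \oplus L_{2}$, then every element of $M$ would lie in $L_{1} \cup L_{2}$, forcing $M$ itself to equal $L_{1} \oplus L_{2}$. After this reduction $M$ is essentially a point set in $\mathbb{R}\mathbb{P}^{3}$, and its planes are the projective planes there.

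Next I would apply a Beck-type dichotomy for planes in $\mathbb{R}\mathbb{P}^{3}$: there exist absolute constants $\alpha, \beta > 0$ such that either (i) $M$ spans at least $\beta n^{3}$ distinct planes, or (ii) some plane contains at least $\alpha n$ points of $M$. In case (i), the standard point-plane incidence bound in $\mathbb{R}^{3}$ gives $\sum_{\pi} |\pi| = O(n^{3})$ (the non-degeneracy hypothesis, that no plane or line carries too many points, is automatic when case (ii) fails, since any rich line sits in planes of matching richness), and dividing by $\beta n^{3}$ planes yields the desired $O(1)$ average.

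Case (ii) is the heart of the argument. Write $\pi_{0}$ for a plane with $|\pi_{0}| \geq \alpha n$ and $B = E(M) \setminus \pi_{0}$. If $|B|$ is bounded by an absolute constant $c$, then each line $L \subset \pi_{0}$ is contained in at most $c$ planes other than $\pi_{0}$ (one per parallel class of $B$ in $M/L$), each of size at most $|L| + c$, and combining this with Melchior's bound inside $\pi_{0}$ gives an $O(1)$ average directly. If $|B|$ is unbounded, I would apply the dichotomy once more to an appropriate contraction or projection to locate a second rich plane $\pi_{1}$; the only way two rich planes can jointly cover most of $E(M)$ without inducing a super-linear number of ``bridging'' planes is when $\pi_{0} \cap \pi_{1}$ is a long line $L$ and both $\pi_{0} \setminus L$ and $\pi_{1} \setminus L$ are themselves essentially lines, which is the direct sum of two lines structure excluded by hypothesis.

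The hard part will be case (ii) with $|B|$ unbounded: one must rule out all intermediate configurations in which $M$ is close to, but not equal to, a direct sum of two lines, by showing that any such deviation generates enough moderate-sized planes to bring the average back to $O(1)$. Executing this iterated Beck / Szemer\'edi--Trotter style argument and carefully tracking the exceptional ``bridging'' points is where the proof draws on the full strength of real representability.
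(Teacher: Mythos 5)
This theorem is cited from \cite{CGK} rather than proved in the present paper, but its proof sits squarely in the framework developed here: one bounds $\sum_{H}(|H|-3)$ from above by $O\bigl(n(n-|X_2|)(n-|X_3|)\bigr)$ via the stratification argument (Theorem~\ref{aggregate} with $k=3$), bounds the number of planes from below by $\Omega\bigl(n(n-|X_2|)(n-|X_3|)\bigr)$ (Lund/Do, or Theorem~\ref{tech_main}), and divides; the only way the lower bound degenerates is $n-|X_3|=0$, which for rank $4$ is exactly the direct sum of two lines. Your plan is a genuinely different route: a direct Beck-style case analysis rather than the aggregate/stratification machinery. Your reduction to simple rank $4$ is sound (truncation preserves rank-$3$ flats, and a rank-$4$ truncation of a rank-$\geq 5$ matroid cannot be a direct sum of two lines since the two lines would have to span a rank-$\geq 5$ ground set). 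Case (i) is correct, though overkill: the trivial bound $\sum_{\pi}|\pi|\leq n\binom{n-1}{2}$ already gives the $O(n^3)$ you need, with no appeal to point--plane incidence theorems or any non-degeneracy hypothesis. Your bounded-$|B|$ subcase of (ii) also works, essentially as you sketch, using that the parts of the planes through a line $L\subseteq\pi_0$ outside $L$ partition a subset of $B$, together with Melchior's inequality in $\pi_0$ and the fact that $\pi_0$ has at least $|\pi_0|\geq\alpha n$ lines.

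The genuine gap is the subcase with $\pi_0$ rich and $|B|$ unbounded, which you yourself flag as ``the hard part'' without supplying an argument. Concretely: (a) it is not established that a second rich plane $\pi_1$ exists --- e.g.\ $\pi_0$ could hold $0.9n$ points while $B$ is in general position or concentrates on a rich \emph{line}, so contracting or projecting need not surface a rich plane; (b) the structural dichotomy you assert (``either super-linear bridging planes, or $\pi_0\cap\pi_1$ is a long line with both complements essentially lines'') is stated, not proved, and it isn't even accurate in the model case --- for the direct sum of two lines the natural pair of rich planes $\pi_0=L_1\cup\{p_2\}$, $\pi_1=L_2\cup\{p_1\}$ meets in the two-point line $\{p_1,p_2\}$, not a long one; and (c) even granting the structural claim, you give no mechanism for converting ``not exactly a direct sum of two lines'' into a quantitative lower bound on the number of planes that beats the contribution of the rich ones. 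This is precisely the work that the aggregate bound and the stratification lower bound do in the paper's framework: they handle all the near-degenerate configurations uniformly, rather than by an open-ended case analysis. As it stands, the proposal would need the entire third case filled in before it constitutes a proof.
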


The idea of the direct sum of two lines generalizes to higher ranks following the definition of a $k$-degenerate matroid, introduced independently by Lov\'asz \cite{Lovasz} in the context of rigidity theory, and by Lund \cite{Lund} and Do \cite{Do} in incidence geometry.
We call a matroid $M$ $k$-{\it{degenerate}} if it either has rank at most one, or there is a collection $(F_{1}, \dots, F_{t})$ of flats of $M$, each having rank at least two, such that $E(M) \subseteq F_{1} \cup \dots \cup F_{t}$ and $\sum_{i=1}^{t} (r_{M}(F_{i})-1) \leq k-1$.
We may assume that the flats in $(F_{1}, \dots, F_{t})$ are pairwise-skew, as otherwise we can replace a non-skew pair with the span of their union.
We call a matroid $1$-{\it{degenerate}} if and only if it is the empty matroid.
Observe that a matroid is $2$-degenerate if and only if it has rank at most two, and it is $3$-degenerate if and only if it is either a direct sum of two lines or has rank at most three.
We call a subset $X \subseteq E(M)$ $k$-{\it{degenerate}} (in $M$) if $M|X$ is a $k$-degenerate matroid.

For a $k$-degenerate matroid $M$, it follows by the pigeonhole principle that every hyperplane $H$ of $M$ must contain one of the flats $F_{i}$.
Hence, the average hyperplane-size can be arbitrarily large in a degenerate matroid. In light of Theorem \ref{planes}, one optimistically might expect that forbidding $k$-degeneracy is sufficient for bounding the average size of a rank-$k$ flat by an absolute constant. A construction given in \cite{CGK} shows that this is not true for $k \geq 5$. 
However, it was shown that the average hyperplane-size in a real-representable matroid is bounded unless it is ``nearly degenerate'', in the sense that all but a constant number of elements are contained in a degenerate restriction.
It remained an open problem to fully characterize the real-representable matroids having average hyperplane-size at most an absolute constant. 

In this paper we show that, in a real-representable matroid $M$ with sufficiently many points, the average hyperplane-size is greater than an absolute constant only if there is some collection of at most $r(M)-2$ lines such that every hyperplane contains one of them.
Furthermore, we give a structural characterization of matroids with this property, which depends on a surprising connection with a classical problem posed independently by Motzkin \cite{motzkin1951lines}, Gr\"unbaum \cite{Grunbaum}, and by  Erd\H{o}s and Purdy \cite{ErdosPurdy}.
Their question was: Given $n$ blue and $t$ red points in the real affine plane, such that every monochromatic line is red, and the blue points are not all collinear, how large can $n$ be as a function of $t$?

To state our main result, we need to consider a higher-dimensional generalization of this question.
A $2$-{\it{colouring}} of a simple matroid $M$ is a partition $(R,B)$ of its ground set into `red' and `blue' sets. For integers $k,t \geq 1$, we define $b_{k}(t)$ to be the largest number $n$ for which there exists a simple rank-$(k+1)$ real-representable matroid $M$ having a $2$-colouring $(R,B)$ with $|R|=t$, $|B| = n$ such that
\begin{enumerate}
    \item[(C1)] $r_{M}(B) \geq 2$;
    \item[(C2)] for every hyperplane $H$ of $M$, $H \cap R \neq \emptyset$; and
    \item[(C3)] for every line $L$ of $M|B$, there is a rank-$(k-1)$ flat $F$ of $M$ skew to $L$ such that $F \cap R = \emptyset$.
\end{enumerate}
If, for given $k,t \geq 1$, there does not exist a number $n$ for which the above is satisfied, then we set $b_{k}(t)=0$. We will prove that the number $b_{k}(t)$ is indeed well-defined for all integers $k,t \geq 1$ (see Theorem \ref{bk_bound}).
For $k=1$, each point is a hyperplane, so (C2) implies that $b_{1}(t)=0$ for all $t$.
Also note that, for $k=2$, (C3) is equivalent to the condition that the set of blue points is not collinear, and so $b_{2}(t)$ is precisely the answer to the question of Motzkin, Gr\"unbaum, Erd\H{o}s, and Purdy. 

\begin{main}\label{th:main}
    For $k \geq 2$, if $M$ is a rank-$(k+1)$ real-representable matroid, then either 
    \begin{enumerate}
        \item the average size of a hyperplane of $M$ is at most a constant $C_{k}$ depending only on $k$, or
        \item there is a collection $L_1,\ldots,L_t$, where $t \leq k-1$, of lines of $M$ such that each hyperplane of $M$ contains one of them.
    \end{enumerate} 
    Moreover, if the average hyperplane-size in $M$ is greater than $C_{k}$, then for some $j \in \{1, \dots, k-1\}$, $M$ has a $(j+1)$-degenerate restriction containing all but at most $b_{k-j}(j)$ of its points.
\end{main}

For a matroid with large average hyperplane-size, the above lower-bound on the size of a degenerate set is best-possible: in Section 1.1, we will show how to construct real-representable matroids with the structure defined in alternative (2) whose ground set consists of a $(j+1)$-degenerate set together with exactly $b_{k-j}(t)$ additional points. 

In Section 2, we will show that $b_k(2)=0$ for all $k$, which means that alternative (2) is only possible for a matroid of rank at most five if it is degenerate.
Moreover, it is not hard to show that $b_2(3)=4$. 
This yields a reasonably precise characterization of the real-representable matroids of rank at most six with large average hyperplane-size.

\begin{corollary}\label{cor:3456}
Let $M$ be a simple real-representable matroid.
If $r(M) \leq 5$, then either $M$ is $(r(M)-1)$-degenerate, or the average hyperplane-size in $M$ is at most an absolute constant.
If $r(M)=6$ and $M$ is not $5$-degenerate, then either $M$ has a $4$-degenerate restriction of size at least $|E(M)|-4$, or the average hyperplane-size in $M$ is at most an absolute constant.
\end{corollary}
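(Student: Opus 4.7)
The plan is to deduce \Cref{cor:3456} directly from the Main Theorem, using the small-parameter values of $b_k(t)$ catalogued in the introduction. Set $k = r(M) - 1$; if the average hyperplane-size in $M$ exceeds the constant $C_k$ of the Main Theorem, then the \emph{moreover} clause provides a $(j+1)$-degenerate restriction of $M$ of size at least $|E(M)| - b_{k-j}(j)$ for some $j \in \{1,\dots,k-1\}$. It suffices to check, for $k \in \{2,3,4,5\}$ and each admissible $j$, that this conclusion matches a target of the corollary.

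The bookkeeping uses four values: $b_1(t) = 0$ (immediate from (C2) since points are hyperplanes in rank~$2$), $b_m(2)=0$ and $b_2(3)=4$ (both stated in the introduction and proved in Section~2), and $b_m(1)=0$ for every $m \geq 2$. The last of these is the only input not explicitly on hand, and I would verify it directly. Suppose $|R|=1$ with red point $r$; then (C2) forces $r$ to lie on every hyperplane of $M$. A short contraction argument shows that any line $L = \mathrm{cl}_M(\{p,q\})$ with $p,q$ blue must also contain $r$: otherwise the image of $r$ would be a non-loop in $M/L$, so some hyperplane of $M/L$ avoids it, and this pulls back to a hyperplane of $M$ missing $r$, contradicting (C2). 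Consequently every pair of blue points is collinear with $r$, so $B$ lies on a single line through $r$, and (C3)---which requires a rank-$(m-1)$ all-blue flat skew to a line of $M|B$---is unsatisfiable for any $m \geq 2$ since all blue points already lie on that line.

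With these four values in hand, the rest of the argument is mechanical. For $r(M) \in \{3,4,5\}$, every relevant $b_{k-j}(j)$ equals $0$, forcing $M$ itself to be $(j+1)$-degenerate and hence $(r(M)-1)$-degenerate, as required. For $r(M)=6$, the four sub-cases give $(b_1(4), b_2(3), b_3(2), b_4(1)) = (0,4,0,0)$; only $j=3$ is non-trivial, and it yields precisely the $4$-degenerate restriction of co-size at most~$4$ claimed by the corollary, while the other sub-cases force $M$ to be $5$-, $3$-, or $2$-degenerate (each strictly stronger). The only genuine step beyond bookkeeping is the verification that $b_m(1)=0$, which I expect to be a short routine argument rather than a significant obstacle.
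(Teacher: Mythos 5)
Your proof is correct and follows essentially the same route the paper intends: apply the \emph{moreover} clause of the Main Theorem with $k=r(M)-1$ and read off the small values of $b_{k-j}(j)$. Your direct verification that $b_m(1)=0$ is a worthwhile addition, since the paper explicitly establishes only $b_1(t)=0$ and $b_k(2)=0$ (the needed values $b_3(1)$ and $b_4(1)$, arising for ranks $5$ and $6$, are left implicit, presumably via the easy monotonicity $b_k(1)\le b_k(2)$ from freely extending by a red point). One small slip: $b_2(3)=4$ is only \emph{asserted} in the introduction; Section~2 proves the lower bound $b_2(3)\ge 4$ via Proposition~\ref{const:improvedGraphical}, but the matching upper bound is not given there.
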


It is well known that Melchior's theorem extends to the classes of complex-representable and orientable matroids. In the latter case, the result extends by direct translation of Melchior's original proof to the setting pseudoline arrangements. On the other hand, it follows by a deep theorem of Hirzebruch \cite{hirzebruch1983arrangements} that the average line-size in a rank-$3$ complex-representable matroid is at most four.
While the results of this paper are formulated for real-representable matroids, they do in fact generalize to both the complex and orientable settings. 
The reason for this is that the proof of the Main Theorem is a purely matroidal inductive argument on $k \geq 2$, where the base case is equivalent to a classical theorem of Beck \cite{Beck}, which says that the number of lines in a real-representable matroid on $n$ points is $\Theta(n(n-\ell))$, where $\ell$ is the size of a largest line. 
While Beck's theorem fails for matroids in general, it does hold for complex-representable and orientable matroids, as explained below.

For $\gamma >0$, we call a class $\mathcal{M}$ of matroids $\gamma$-{\it{Beck}} if, for every matroid $M$ in $\mathcal{M}$ on $n$ points, the following are satisfied: 
\begin{enumerate}
    \item[(B1)] every minor of $M$ is in $\mathcal{M}$;
    \item[(B2)] every free extension of $M$ in any flat is in $\mathcal{M}$; and
    \item[(B3)] $M$ has at least $\gamma n(n-\ell)$ lines, where $\ell$ is the size of a largest line in $M$.
\end{enumerate}
Given a matroid in a $\gamma$-Beck class, every assertion in our proof is true up to the specification of relevant multiplicative constants (which depend on $\gamma$). In particular, because Beck's Theorem is weaker than the Szemer\'edi-Trotter Theorem \cite{SzemerediTrotter}, which was proved for complex-representable matroids independently by T\'oth \cite{Toth} and Zahl \cite{Zahl}, the Main Theorem also holds for this class. Similarly, Sz\'ekely \cite{szekely1997crossing} gave a proof of the Szemer\'edi-Trotter theorem that is easily seen to bound the number of incidences between points and pseudolines, and hence the Main Theorem also holds for the orientable matroids.

We conclude this section by presenting some further context for the Main Theorem and its proof. First we discuss the problem of Motzkin-Gr\"unbaum-Erd\H{o}s-Purdy and its higher-dimensional generalization in greater detail. After that, we introduce the key problem to which we reduce the proof of the Main Theorem, namely the problem of {\it{counting hyperplanes}}. In Section 2, we give several bounds on the number $b_{k}(t)$, and ultimately show that $b_{k}(t) = \Theta_{k}(t)$. In Section 3, we prove some helpful lemmas on the operation of {\it{principal truncation}}, which we use to invoke the inductive hypothesis in our proof. In Section 4, we prove the Main Theorem.

\subsection{The problem of Motzkin, Gr\"unbaum, Erd\H{o}s and Purdy}

Answering a question of Graham (see \cite{grunbaum1999monochromatic}), Motzkin and Rabin \cite{motzkin1967nonmixed} and Chakerian \cite{chakerian1970sylvester} showed that any set of non-collinear red and blue points in the real plane determines a monochromatic line.
Various generalizations of the Motzkin-Rabin theorem to higher dimensions have been investigated \cite{dvir2015quantitative, huicochea2021number, pretorius2009sylvester}.

If all monochromatic lines in such an arrangement are red, then one would generally expect that the number of blue points does not greatly exceed the number of red ones. This prompted Motzkin \cite{motzkin1951lines}, Gr\"unbaum \cite{Grunbaum}, and Erd\H{o}s and Purdy \cite{ErdosPurdy} to pose the problem defined above on bounding $b_{2}(t)$.

The earliest result on this problem was by Motzkin \cite{motzkin1951lines}, who showed that $b_2(t) \leq (t-1)^2$.
In introducing the problem, Motzkin noted that there is no obvious generalization of this result to higher dimensions.

Currently, the best general upper bound on $b_2(t)$ is due to Huicochea, Lea{\~n}os, and Rivera \cite{huicochea2022note}, who built on the work of Pinchasi \cite{pinchasi2013solution} to show that $b_2(t) \leq (11/4)t + 13/4$.
For lower bounds, Gr\"unbaum found families of point sets showing that $b_2(t) \geq t+4$ for infinitely many $t$, and a construction that is not known to be a member of any infinite family showing that $b_2(10) \geq 16$ \cite{grunbaum1999monochromatic}.

More precise answers are known for some special types of arrangements.
The most famous of these is Ungar's proof \cite{ungar1982directions} of Scott's conjecture \cite{scott1970sets} that $n$ points in the plane  not all contained in a single line determine at least $2\lfloor n/2 \rfloor$ directions.
This corresponds to bichromatic point sets without monochromatic blue lines in which the red points are required to be collinear (since each direction corresponds to a point on the line at infinity in the projective plane).
Erd\H{o}s and Purdy \cite{ErdosPurdy} also asked for the largest number of blue points with no three collinear, all of whose connecting lines can be covered by $n$ red points.
The answer to this question is exactly $n$, as was first proved by Ackerman, Buchin, Knauer, Pinchasi and Rote \cite{ackerman2008there}. Simpler proofs were later found by Pinchasi and Polyanskii \cite{pinchasi2021theorem}, \cite{pinchasi2020one}.

Recall from the Main Theorem that a simple real-representable matroid $M$ with large average hyperplane-size has a $j$-degenerate set of size at least $|E(M)| - b_{r(M)-j}(t)$ for some $j \in \{1, \dots, r(M)-1\}$ and $t \leq r(M)-2$. We now give a construction which demonstrates that this bound is best-possible. 
As a special case, consider the bichromatic set of points in \cref{fig:b2(3)}.
Adjoin to each red point an $m$-point line in a new direction so as to obtain a set of $3m+4$ points in $\mathbb{R}^{5}$. (Recall that $b_{2}(3)=4$.) Any hyperplane spanned by this resulting point set must contain one of the $m$-point lines. The reason for this is that every line in the arrangement in \cref{fig:b2(3)} contains a red point.

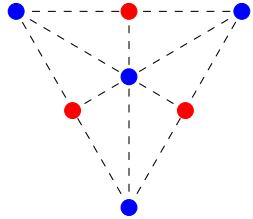
\begin{figure}[h!]
\begin{center}
\begin{tikzpicture}[scale=3]

\draw[dashed] (0,0)--(-1/2,.87);
\draw[dashed] (0,0)--(1/2,.87);
\draw[dashed] (0,0)--(0,.87);
\draw[dashed] (-1/2,.87)--(1/4,.43);
\draw[dashed] (1/2,.87)--(-1/4,.43);
\draw[dashed] (-1/2,.87)--(1/2,.87);
\filldraw[blue] (0,0) circle (1pt);
\filldraw[blue] (-1/2,.87) circle (1pt);
\filldraw[blue] (1/2,.87) circle (1pt);
\filldraw[blue] (0,.58) circle (1pt);
\filldraw[red] (-1/4,.43) circle (1pt);
\filldraw[red] (1/4,.43) circle (1pt);
\filldraw[red] (0,.87) circle (1pt);

\end{tikzpicture}
\end{center}
\caption{Four blue and three red points with no monochromatic blue line.}
\label{fig:b2(3)}
\end{figure}

In general, let $(R,B)$ be a $2$-colouring of a simple rank-$r$ real-representable matroid $M$ satisfying the properties (C1), (C2) and (C3) with $|R|=t \geq 3$ and $|B|=b_{r-1}(t)$. Iteratively $2$-summing an $m$-element line onto each red point, we obtain a rank-$(r+t)$ matroid $M'$ where each hyperplane must contain one of the $m$-point lines.

In Section 2, , we prove bounds on the numbers $b_{k}(t)$: we show that $b_{k}(2)=0$ and that $b_{k}(3) \geq {{k+2}\choose{2}}-2$ for all $k$. We also show that $b_{k}(t)$ is linear in $t$ in general.

\begin{theorem}\label{bk_bound}
For $k,t \geq 1$, $b_{k}(t) = \Theta_{k}(t)$.
\end{theorem}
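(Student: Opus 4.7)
The plan is to establish both bounds separately. For the lower bound $b_k(t) = \Omega_k(t)$, I would iterate a $2$-sum construction, starting from Gr\"unbaum's planar configuration \cite{grunbaum1999monochromatic} achieving $b_2(t') \geq t'+4$ in rank $3$. Lifting to rank $k+1$ proceeds by $2$-summing rank-$2$ gadgets along red points, with (C1)--(C3) verified at each step; this yields, for infinitely many $t$, a rank-$(k+1)$ configuration with $\Omega_k(t)$ blue points. For the upper bound $b_k(t) = O_k(t)$ I would induct on $k \geq 2$, the base case $k = 2$ being the linear bound $b_2(t) \leq \tfrac{11}{4}t + \tfrac{13}{4}$ of Huicochea, Lea\~nos and Rivera \cite{huicochea2022note}.

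For the inductive step, let $M$ be a rank-$(k+1)$ real-representable matroid with $2$-colouring $(R,B)$ satisfying (C1)--(C3), with $|R|=t$ and $|B|=n$. Combining (C1) with (C3) applied to any blue line produces a blue line together with a skew blue rank-$(k-1)$ flat, so $r_M(B) = k+1$ and $B$ spans $M$. I would then case-split on the size of the largest hyperplane of $M|B$. If every such hyperplane has at most $n/2$ blue elements, a higher-rank analogue of Beck's theorem (provable by iterated application of Beck's theorem in generic projections from $M|B$) yields $\Omega_k(n^k)$ hyperplanes of $M|B$, each of which extends to a hyperplane of $M$ meeting $R$ by (C2). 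Since any point of $M$ lies in at most $O_k(n^{k-1})$ hyperplanes, double counting gives $n = O_k(t)$.

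Otherwise some hyperplane $H$ of $M$ satisfies $|H \cap B| > n/2$, and I would aim to apply the inductive hypothesis to the rank-$k$ restriction $M|H$ with the induced $2$-colouring. The main obstacle, and the crux of the proof, is that (C2) may fail in $M|H$: a rank-$(k-1)$ flat of $M$ contained in $H$ need not meet $R$. To handle this, the key observation is that if $G$ is any rank-$(k-1)$ flat of $M$ with $G \cap R = \emptyset$, then every parallel class of the rank-$2$ contraction $M/G$ must meet the image of $R$ (otherwise (C2) would fail on the corresponding hyperplane of $M$), so $G$ is contained in at most $t$ hyperplanes of $M$. A refined double count over red-free rank-$(k-1)$ sub-flats of $H$ then either restores (C2) in $M|H$ after removing at most $O_k(t)$ blue elements, or directly bounds $|H \cap B|$ in terms of $t$; either way we recover $n = O_k(t)$.
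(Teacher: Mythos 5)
Your lower bound is a plausible but unverified alternative to the paper's: the paper (Proposition~\ref{lower_bk}) stacks $k-1$ parallel copies of a planar point set determining few directions and puts the red points at infinity, yielding $b_k(t)\geq(k-1)t$ directly; your $2$-sum iteration of Gr\"unbaum's rank-$3$ configuration is a different idea and would need careful checking that (C2) survives (the basepoint disappears in the $2$-sum, so the red point being replaced must be ``covered'' by a red element on the new gadget) and, more delicately, that (C3) survives, since each $2$-sum raises $k$ by one and so raises the rank of the skew monochromatic flat you must exhibit. Neither of these is immediate, but it's a reasonable route.

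The upper bound is where the real gap is. Your first case (no hyperplane of $M|B$ with $\geq \epsilon_k|B|$ points) is sound and mirrors the base-case flavour of the paper's argument: Beck's Theorem for Hyperplanes gives $\Omega_k(n^k)$ hyperplanes of $M|B$, each closes to a distinct hyperplane of $M$ meeting $R$, and the degree bound $O_k(n^{k-1})$ per red point forces $n=O_k(t)$. (Be careful that the threshold is $\epsilon_k n$ with $\epsilon_k<\tfrac{1}{k}\lfloor k/2\rfloor<\tfrac12$, not $n/2$.) But the second case is broken. You propose to pass to $M|H$ for a large hyperplane $H$ and induct, and you correctly identify that (C2) can fail in $M|H$; your observation that a red-free rank-$(k-1)$ flat lies in at most $t$ hyperplanes of $M$ is also correct. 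However, the ``refined double count'' that is supposed to restore (C2) is not an argument, and it is unclear that it can be made into one: there is no a priori bound on the \emph{number} of red-free rank-$(k-1)$ flats inside $H$, only on the hyperplane-degree of each, and deleting blue points to kill them changes the flat structure. Moreover you do not address (C3) at all, which can also fail in $M|H$: the blue rank-$(k-1)$ flat promised by (C3) in $M$ need not lie in $H$, so after restriction there may be a blue line with no skew red-free codimension-$2$ flat, and the inductive hypothesis simply does not apply.

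The paper avoids these problems by never restricting to a hyperplane. Instead it uses the Weak Dirac Theorem to find a blue element $e$ on $\Omega(|B|)$ blue lines, contracts $e$, and recolours by declaring an element of $M/e$ red iff the corresponding line through $e$ meets $R$. Contraction with this recolouring \emph{automatically} preserves (C2), which is the crucial point your restriction-based plan lacks. Condition (C3) still may fail after contraction, and the paper repairs it by principally truncating offending lines (Lemma~\ref{gmep_helpful}(2)); a preliminary size bound on flats (Claim~\ref{flat_bound}) ensures that these truncations destroy only an $O(1/k)$-fraction of the blue points, so the inductive hypothesis still yields a linear bound. Both the contraction step and the truncation-plus-flat-size-control step are missing from your proposal, and I do not see how to complete the large-hyperplane case along the lines you sketch.
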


It is easily verified that the Weak Dirac Theorem (see Section 2) holds for any simple matroid in a $\gamma$-Beck class with a constant depending on $\gamma$. It follows that $b_{2}(t)$ is finite not only for real-representable matroids (as discussed above), but more generally for any $\gamma$-Beck class.
Our proof in Section 2 that $b_{k}(t) \leq O_{k}(t)$ is by induction on $k$ and, aside from the invocation of Beck's Theorem, is purely matroidal. Thus $b_{k}(t) = \Theta_{k, \gamma}(t)$ for any $\gamma$-Beck class.

\subsection{Counting hyperplanes}

The proofs of the average bounds in \cite{CGK} rely fundamentally on good approximations of the number of flats of a given rank. As mentioned above, the classical theorem of this type is Beck's approximation \cite{Beck} on the number of lines in a real-representable matroid. Do \cite{Do} and Lund \cite{Lund} independently formulated and proved high-dimensional generalizations of this theorem, which were used in the proofs of the theorems of \cite{CGK}. While Beck's approximation is in terms of a largest line in the matroid, the idea in higher dimensions is to consider largest degenerate sets. For instance, Do's Theorem (see Section 4) roughly says that either the matroid has $\Omega(n^{k})$ rank-$k$ flats, or it contains a very large $k$-degenerate set. 

For our purposes, we keep track of a largest $i$-degenerate set for every possible $i$. For $k \geq 2$ and a matroid $M$, a $k$-{\it{stratification}} of $M$ is a sequence $(X_{2}, \dots, X_{k})$ of subsets of its ground set such that, for each $i \in \{2, \dots, k\}$, $X_{i}$ is $i$-degenerate and $X_{i-1} \subseteq X_{i}$. (In this context, $X_{1}$ denotes the empty set, and $X_{k+1}$ denotes the ground set of $M$.) If, for each $i \in \{2, \dots, k\}$, $X_{i}$ is a largest $i$-degenerate subset of $X_{i+1}$, then the $k$-stratification is called {\it{optimal}}. 

In \cite{Lund}, Lund proved that, if $(X_{2}, \dots, X_{k})$ is an optimal $k$-stratification of a simple real-representable matroid $M$ on $n$ elements, then $M$ has at least $\Omega(n \prod_{i=2}^{k}(n - |X_{i}|))$ rank-$k$ flats, provided that $n-|X_{k}|$ is at least some constant depending only on $k$. 
In \cite{CGK}, this lower bound was combined with the following in order to bound the average hyperplane-size.

\begin{theorem}[\cite{CGK}]\label{aggregate}
For each integer $k \geq 2$, if $M$ is a simple rank-$(k+1)$ matroid on $n$ elements which is not $k$-degenerate, and $(X_{2}, \dots, X_{k})$ is an optimal $k$-stratification of $M$, then
\begin{align*}
    \sum_{H \in \cH(M)} (|H|-k) \leq 2^{k(k-1)} n \prod_{i=2}^{k} (n-|X_{i}|),
\end{align*}
where $\cH(M)$ denotes the collection of hyperplanes of $M$.
\end{theorem}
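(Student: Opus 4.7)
I would prove this by induction on $k \geq 2$, with the base case $k=2$ following from Beck's Theorem. Specifically, for a rank-$3$ simple matroid $M$ on $n$ points with longest line of size $\ell = |X_2|$, the target inequality $\sum_{L}(|L|-2) \leq 4n(n-\ell)$ follows from Beck's lower bound on the number of lines combined with the double-count identity $\sum_L |L|(|L|-1) = n(n-1)$: the excess mass $\sum_L(|L|-2)$ is concentrated on long lines, and Beck's theorem forces long lines (other than $X_2$ itself) to be rare whenever $n - \ell$ is large.

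For the inductive step from rank $k$ to rank $k+1$, the plan is to pass to a rank-$k$ matroid by contracting a point. For each $e \in E(M)$, let $\bar{M}_{e} := \si(M/e)$, which is a rank-$k$ matroid whose ground set consists of the lines of $M$ through $e$; its hyperplanes correspond bijectively to the hyperplanes of $M$ containing $e$. Given $H \ni e$ with corresponding hyperplane $\bar{H}$ of $\bar{M}_{e}$, partitioning $H \setminus \{e\}$ into the lines of $M$ through $e$ lying in $H$ yields the key identity
\begin{equation*}
|H| - k \;=\; (|\bar{H}| - (k-1)) \;+\; \sum_{L \ni e,\, L \subseteq H}(|L|-2).
\end{equation*}
Summing this identity over all pairs $(e,H)$ with $e \in H$, using $|H| \geq k$ to obtain $\sum_{H}|H|(|H|-k) \geq k\sum_{H}(|H|-k)$, and swapping the order of summation in the secondary term gives
\begin{equation*}
k \sum_{H \in \cH(M)}(|H|-k) \;\leq\; \sum_{e \in E(M)} \sum_{\bar{H} \in \cH(\bar{M}_{e})}(|\bar{H}|-(k-1)) \;+\; \sum_{L \in \cL(M)} |L|(|L|-2)\,h(L),
\end{equation*}
where $h(L)$ denotes the number of hyperplanes of $M$ containing the line $L$.

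For the first term on the right, I would apply the inductive hypothesis to each $\bar{M}_{e}$, which requires upper bounds on $|X_{i}(\bar{M}_{e})|$ in terms of $|X_{i+1}(M)|$: this is the content of the principal-truncation lemmas of Section~3, which (roughly) assert that an $i$-degenerate subset of $\bar{M}_{e}$ lifts to an $(i+1)$-degenerate subset of $M$ containing $e$. After this transfer and summation over $e$, the first term contributes at most $2^{(k-1)(k-2)} \cdot n \prod_{i=2}^{k}(n-|X_{i}|)$ up to absolute constants. The secondary term is handled by recognizing $h(L)$ as a hyperplane count in the contraction $M/L$, then iterating the induction or invoking Do's Theorem together with the base case to bundle it with the $|L|(|L|-2)$ factor.

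The main obstacle is the precise bookkeeping required for the stratification transfer: the optimal $(k-1)$-stratification of $\bar{M}_{e}$ is not directly inherited from the optimal $k$-stratification of $M$, and only upper bounds on $|X_{i}(\bar{M}_{e})|$ are available, supplied by the principal-truncation machinery. A separate subtlety is the case where $\bar{M}_{e}$ itself is $(k-1)$-degenerate (in which case the inductive hypothesis does not directly apply and must be replaced by a direct argument from the decomposition), and one must ensure every hyperplane of $M$ is captured in the sum over $e$, which uses that no hyperplane lies entirely in $X_{k}$ (else it would defeat the $k$-degeneracy of $X_k$). Careful tracking of constants shows that each induction step costs a multiplicative factor of at most $2^{2(k-1)}$, accumulating to the claimed $2^{k(k-1)}$.
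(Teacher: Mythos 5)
Theorem~\ref{aggregate} is attributed to \cite{CGK} and imported as a black box; this paper contains no proof of it, so there is nothing ``in-paper'' to compare against. Assessing your sketch on its own terms: the algebraic skeleton is sound. The identity $|H|-k = (|\bar H|-(k-1)) + \sum_{L\ni e,\,L\subseteq H}(|L|-2)$ checks out (partition $H\setminus\{e\}$ into the lines through $e$ inside $H$), summing it over incident pairs $(e,H)$ and using $|H|\ge k$ does give the displayed inequality, and the bijection between $\cH(\bar M_e)$ and $\{H\in\cH(M):e\in H\}$ is correct. The base case $k=2$ also works, though not via Beck: the bound $\sum_L(|L|-2)\le 4n(n-\ell)$ follows by elementary counting (trivially if $\ell\le n/2$; if $\ell>n/2$ every line other than $X_2$ has size at most $n-\ell+1$ and $\sum_{L\ne X_2}\binom{|L|}{2}=\binom{n-\ell}{2}+\ell(n-\ell)$ finishes it). This matters because the theorem is stated for \emph{arbitrary} simple matroids; Beck's Theorem and Do's Theorem hold only for real-representable (or $\gamma$-Beck) matroids, so invoking them would establish only the representable case.

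The real gap is the stratification transfer. You write that the lemmas of Section~3 ``assert that an $i$-degenerate subset of $\bar M_e$ lifts to an $(i+1)$-degenerate subset of $M$,'' but Section~3 concerns \emph{principal truncation} of a flat (projection from a generic new point inside $\cl_M(F)$), which is a different quotient from contraction of an existing element $e$; Proposition~\ref{truncation_prop} and Lemma~\ref{truncation_lemma} do not apply to $M/e$. Moreover the lifting claim is false as stated: if $Y$ is $i$-degenerate in $\si(M/e)$ via $s\ge 2$ pairwise-skew flats $G_1,\dots,G_s$, then the lifted flats $\cl_M(G_j\cup\{e\})$ all contain $e$, so after merging non-skew pairs one is left with a single flat of rank $\sum r(G_j)+1$, giving only $(i+s)$-degeneracy. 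And this is also the wrong direction: to apply the inductive hypothesis profitably you need a \emph{lower} bound on $|X_i(\bar M_e)|$ (so that $m_e-|X_i(\bar M_e)|$ is small), which requires pushing an $(i{+}1)$-degenerate set of $M$ containing $e$ down to an $i$-degenerate set of $\bar M_e$ and then reconciling this with the nested, top-down definition of an \emph{optimal} stratification (it is not immediate that a non-optimal stratification of $\bar M_e$ with large $|Y_i|$'s dominates the optimal one in the product $\prod(m_e-|X_i|)$, since the sets are nested inside different parents). Finally, the secondary term $\sum_L|L|(|L|-2)h(L)$ and the case where $\bar M_e$ is $(k-1)$-degenerate are flagged but not argued; as written, these are the places where the claimed constant $2^{k(k-1)}$ would have to be earned, and the sketch gives no mechanism for that accounting.
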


Because of Theorem \ref{aggregate}, our proof of the Main Theorem entirely consists of obtaining a good approximation on the number of hyperplanes under the assumption that outcome (2) does not hold. An additional consequence of our proof is that the constant in Lund's aforementioned theorem is bounded by $b_{k}(k-2)$:

\begin{theorem}
For $k \geq 2$, there exists $\rho_{k} > 0$ such that, if $M$ is a simple real-representable matroid on $n$ elements, $(X_{2}, \dots, X_{k})$ is an optimal $k$-stratification of $M$, and $n - |X_{k}| > b_{k}(k-2)$, then the number of rank-$k$ flats in $M$ is at least $\rho_{k} n \prod_{i=2}^{k} (n - |X_{i}|)$.
\end{theorem}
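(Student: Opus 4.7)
The plan is to extract the theorem from the proof of the Main Theorem. As the authors observe after Theorem~\ref{aggregate}, the Main Theorem's proof in Section~4 actually produces, under the negation of outcome~(2), a lower bound of the form $|\cH(M)| \geq \rho_k \, n \prod_{i=2}^{k}(n - |X_i|)$; outcome~(1) is then read off via Theorem~\ref{aggregate}. My task therefore reduces to verifying that the hypothesis $n - |X_k| > b_k(k-2)$ forces outcome~(2) to fail, and to transporting the resulting hyperplane count from the rank-$(k+1)$ setting into a general rank-$k$ flat count.

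First I would reduce to the case $r(M) = k+1$ by replacing $M$ with its rank-$(k+1)$ truncation. The hyperplanes of this truncation coincide with the rank-$k$ flats of $M$, and for every $i \leq k$ a subset of $E(M)$ is $i$-degenerate in $M$ if and only if it is $i$-degenerate in the truncation, since any collection of flats witnessing $i$-degeneracy has rank at most $i \leq k$ and such flats are common to $M$ and its truncation. Consequently, $(X_2, \ldots, X_k)$ remains an optimal $k$-stratification of the truncation.

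Next I would apply the Main Theorem to this rank-$(k+1)$ matroid and suppose for contradiction that outcome~(2) holds. The ``Moreover'' clause then produces some $j \in \{1, \ldots, k-1\}$ and a $(j+1)$-degenerate restriction of size at least $n - b_{k-j}(j)$. Since $j+1 \leq k$, every $(j+1)$-degenerate set is also $k$-degenerate, so the optimality of $X_k$ yields $|X_k| \geq n - b_{k-j}(j)$, that is, $n - |X_k| \leq b_{k-j}(j)$. Combined with the bounds developed in Section~2---notably $b_k(1) = b_k(2) = 0$ (which covers the extreme values $j \in \{1, 2, k-2, k-1\}$) together with monotonicity of $b$ in both coordinates (which handles the middle values)---this gives $b_{k-j}(j) \leq b_k(k-2)$ for every admissible $j$, contradicting the hypothesis $n - |X_k| > b_k(k-2)$. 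So outcome~(2) must fail, and the intermediate hyperplane-count from the Main Theorem's proof delivers the desired bound.

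I expect the main obstacle to be the monotonicity comparison $b_{k-j}(j) \leq b_k(k-2)$. While the excerpt establishes $b_k(t) = \Theta_k(t)$ and the specific vanishings $b_k(1) = b_k(2) = 0$, monotonicity of $b_k(t)$ in $k$ is not stated outright; proving it should amount to showing that any extremal bichromatic construction in rank $k'+1$ lifts to one in rank $k+1$ for $k \geq k'$ by a generic extension on the red side, together with a check that conditions~(C1)--(C3) persist under the lift. A secondary, largely bookkeeping, step is to confirm by direct inspection of the Section~4 argument that the hyperplane count $|\cH(M)| \geq \rho_k \, n \prod (n - |X_i|)$ is genuinely isolated as an intermediate inequality, rather than being implicit in outcome~(1) alone.
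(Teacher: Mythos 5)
The paper states this result without a separate proof, asserting that it follows from the proof of the Main Theorem, so the question is whether your reconstruction fills that in soundly. The skeleton is right: pass to the rank-$(k+1)$ truncation of $M$ (which preserves $i$-degenerate subsets for all $i \leq k$ and hence the optimal $k$-stratification), apply Theorem~\ref{tech_main} with $R = \emptyset$ and $\ell = 0$, and show that $n - |X_k| > b_k(k-2)$ rules out $(k,0)$-degeneracy of the resulting triple. Note, though, that the correct thing to invoke is Theorem~\ref{tech_main} directly rather than alternative~(2) of the Main Theorem: the ``Moreover'' clause is triggered by the average hyperplane-size exceeding $C_k$, not by alternative~(2), and alternative~(2) by itself does not imply $(k,0)$-degeneracy (compare the bound $t \leq k-2$ in (D1) with the up-to-$(k-1)$ covering lines of alternative~(2)). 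The repair is minor: if the triple is $(k,0)$-degenerate, run the derivation of the Main Theorem from Theorem~\ref{tech_main}, Lemma~\ref{gmep_structure}, and Proposition~\ref{formula} to extract the degenerate restriction.

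The more serious issue is the comparison $b_{k-j}(j) \leq b_k(k-2)$. Tracing the derivation, what comes out when the triple is $(k,0)$-degenerate is a $(j+1)$-degenerate restriction of size at least $n - b_{k-j}(r)$, where $r$ is the number of truncated points, so $r \leq t \leq k-2$, and $1 \leq k-j \leq k-1$. Getting from $b_{k-j}(r)$ (or from $b_{k-j}(j)$, as you have it) to $b_k(k-2)$ requires monotonicity of $b$ in the first (rank) variable, which is neither stated nor proved in the paper. Your remark that the vanishings $b_k(1) = b_k(2) = 0$ handle the cases $j \in \{1,2,k-2,k-1\}$ fails at $j = k-2$: that case yields $b_2(k-2)$, which is positive once $k \geq 5$. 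And the sketch you offer for monotonicity in $k$ (``a generic extension on the red side'') does not go through: adding a generic red point increases the rank but also increments $|R|$, giving at best $b_{k'}(t) \leq b_{k'+1}(t+1)$, which is the wrong comparison, while a generic blue lift breaks (C3) because the new lines through the lifted point admit no monochromatic-blue codimension-two flat skew to them. Either a genuinely new argument establishing monotonicity of $b_k(t)$ in $k$ at fixed $t$ is needed, or the constant should be read as $\max\{b_{k'}(t') : k' \leq k-1,\ t' \leq k-2\}$; as written, there is a real gap at this step.
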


\subsection{Matroid terminology}
All of the results in this paper intrinsically concern a finite set of points in space and its spanned hyperplanes. The language of matroid theory allows us to handle such geometric information in a purely combinatorial manner and without referring to any particular coordinatization. Informally, given a finite set $P$ of points in $\mathbb{R}^{d}$ not contained in an affine hyperplane, the corresponding matroid $M$ is a combinatorial object which records which subsets of $P$ are collinear, which subsets are coplanar, and so on. This data is specified by the collection of  subsets of $P$ which are affinely independent; we call these the {\it{independent sets}} of $M$. A maximal independent set of $M$ is called a {\it{basis}}, and a minimal dependent set is called a {\it{circuit}}. A {\it{hyperplane}} of $M$ is a maximal subset of $P$ contained in a $(d-1)$-dimensional affine subspace of $\mathbb{R}^{d}$. For a subset $S \subseteq P$, the {\it{deletion}} of $S$ in $M$ is denoted by $M \del S$ and is defined in the obvious way; the {\it{restriction}} to $S$ is denoted by $M|S$. The {\it{contraction}} of $S$ in $M$, denoted $M \con S$, is the matroid obtained by centrally projecting from the affine subspace spanned by $S$ in $\mathbb{R}^{d}$. We refer the reader to Appendix A of \cite{CGK} for a brief introduction to matroid theory where these notions are defined formally.

\section{Point sets without monochromatic blue hyperplanes}\label{sec:gmep}

The main result of this section is that $b_{k}(t) = \Theta_{k}(t)$. 
The construction for the lower bound shows that $b_{k}(t) \geq \Omega(kt)$, but this does not seem to be best-possible. We now give a construction, generalizing that shown in \cref{fig:b2(3)}, where the number of blue points has quadratic dependence on $k$ for $t=3$.

\begin{proposition}\label{const:improvedGraphical}
    For any $k \geq 2$, $b_k(3) \geq \binom{k+2}{2} - 2$.
\end{proposition}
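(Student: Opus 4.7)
The plan is to construct the configuration $C_k$ by induction on $k$. For the base case $k=2$, take $C_2$ to be the complete quadrangle of \cref{fig:b2(3)}: four blue points in general position in the projective plane together with the three diagonal points as red; one checks directly that this witnesses $b_2(3) \geq 4 = \binom{4}{2}-2$. For the inductive step with $k \geq 3$, starting from a valid rank-$k$ configuration $C_{k-1}$ with $\binom{k+1}{2}-2$ blue and $3$ red points, I would embed $C_{k-1}$ as a rank-$k$ flat $\Pi$ of a rank-$(k+1)$ real-representable matroid $C_k$ and adjoin $k+1$ new blue points outside $\Pi$. This gives $|B(C_k)| = (\binom{k+1}{2}-2) + (k+1) = \binom{k+2}{2}-2$ and $|R(C_k)|=3$.

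To verify (C1)--(C3) inductively: (C1) is immediate since the blue points in $\Pi$ already span rank at least $2$. For (C2), any hyperplane $H$ of $C_k$ is either $\Pi$ itself (which contains all three red points) or meets $\Pi$ in a rank-$(k-1)$ flat, i.e., a hyperplane of $C_{k-1}$, which contains a red by induction. For (C3), given a blue line $L$ of $C_k|B$, there are two cases: if $L \subseteq \Pi$, extend an inductively provided rank-$(k-2)$ no-red flat of $C_{k-1}$ skew to $L$ by any new blue point; otherwise, combine a rank-$(k-2)$ no-red flat of $C_{k-1}$ (whose existence follows from a parallel induction asserting that $C_j$ contains a rank-$(j-1)$ no-red flat for every $j \geq 2$) with a new blue not in $L$, checking that the resulting rank-$(k-1)$ flat is skew to $L$ and contains no red.

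The main technical obstacle is the precise placement of the $k+1$ new blue points. Placing them all on a single external line through a point of $\Pi$ makes (C2) easy but breaks (C3) for that line, since any red-free skew complement would then have to live entirely in $\Pi$, contradicting the inductive (C2) for $C_{k-1}$. Placing them in fully general position instead salvages (C3) but lets a hyperplane spanned by two of the new points intersect $\Pi$ in a flat of rank only $k-2$, breaking (C2). My intended resolution is to spread the new blue points over several external lines passing through distinct blue points of $\Pi$, so that every hyperplane still meets $\Pi$ in rank exactly $k-1$ (preserving (C2)) while each external line's blue line admits another external line as a red-free skew flat (preserving (C3)). The crux of the argument is checking that the parameters of this spread can be chosen to accommodate every blue line in $C_k|B$ simultaneously.
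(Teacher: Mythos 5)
Your inductive approach is genuinely different from the paper's, which instead gives an explicit construction: colour the edges of the graphic matroid of $K_{k+2}$ blue except for $r_1 = \{1,2\}$ and $r_2 = \{3,4\}$, add one extra red point $r_3 = (1,1,-1,-1,0,\dots,0)$, and verify (C1)--(C3) directly via the partition description of the flats of $M(K_{k+2})$. However, your proof has a genuine gap that you yourself flag. The claim in your (C2) paragraph that any hyperplane $H \neq \Pi$ of $C_k$ ``meets $\Pi$ in a rank-$(k-1)$ flat'' is false in general: the lattice of flats of a matroid is only semimodular, not modular, and if $H$ is spanned by $j \geq 2$ of the new blue points together with a flat of $\Pi$, then $H \cap \Pi$ can have rank as small as $k-j$; since (C3) for $C_{k-1}$ guarantees red-free flats of rank $k-2$, such an $H$ can be monochromatic blue. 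You recognize this tension in your final paragraph, but the proposed fix (spreading the new points over external lines) is never made precise, and you explicitly state that the crux of making it work is unverified. There is also a rank mismatch in your (C3) argument: supplying ``another external line as a red-free skew flat'' produces only a rank-$2$ object, whereas (C3) requires a rank-$(k-1)$ flat, so this mechanism already fails for $k \geq 4$.

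For what it is worth, the paper's construction does realize the induction you envision, and it shows what the missing placement has to look like. Taking $\Pi$ to be the rank-$k$ flat $\{\sum_i x_i = 0\}$ spanned by the edges of $K_{k+1}$ (which contains $r_1$, $r_2$, and $r_3$), the $k+1$ new blue points are the edges of the star at vertex $k+2$. With this choice, a hyperplane $H$ of $M(K_{k+2})$ corresponds to a $2$-partition $(V_1,V_2)$ of $[k+2]$ with $k+2 \in V_2$, and whenever $H \neq \Pi$ (equivalently $V_2 \neq \{k+2\}$), the intersection $H \cap \Pi$ is exactly the hyperplane of $M(K_{k+1})$ given by $(V_1, V_2 \setminus \{k+2\})$. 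So the modular-pair condition you need holds automatically --- but this is a special feature of the graphic-matroid structure, and completing your argument essentially forces you to rediscover this star placement rather than relying on a generic or ad hoc spread.
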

\begin{proof}
    We first describe the construction, and then show that it has the required properties.
    Our construction is a colouring of the union of the graphic matroid for $K_{k+2}$ with one additional point.
    The rank-$4$ construction is illustrated in \cref{fig:rank4construction}.
    
    Start with the standard real representation of the graphic matroid for $K_{k+2}$.
    In more detail, the image of an edge $\{i,j\} \in E(K_{k+2})$ with $i < j \leq k+1$ is the vector $(x_1, x_2, \ldots, x_{k+1})$ with $x_i = 1$, $x_j = -1$, and all other coordinates equal to $0$.
    For $i < j = k+2$, the image of $\{i,j\}$ is the vector with $x_i = 1$ and all other coordinates equal to $0$.
    
    With a slight abuse of notation, we use $r_1$ both for the edge $r_1 = \{1,2\} \in E(K_{k+2})$, and the point $r_1 = (1,-1,0,0,\ldots,0) \in \mathbb{R}^{k+1}$.
    Similarly, let $r_2 = \{3,4\} \in E(K_{k+2})$ and $r_2 = (0,0,1,-1,0,\ldots,0) \in \mathbb{R}^{k+1}$.
    Let $r_3 = (1,1,-1,-1,0,\ldots,0)$; note that there is no corresponding edge in $K_{k+2}$.
    Let $R = \{r_1,r_2,r_3\}$.
    Let $B = E(K_{k+2}) \setminus R$, and the corresponding points in $\mathbb{R}^{k+1}$.
    Let $M_g$ be the graphic matroid over $K_{k+2}$, and let $M$ be the matroid represented by the points $B \cup R$.

        \begin{figure}[h!]
        \centering
        \begin{minipage}{.3\textwidth}
            \centering
            \begin{tikzpicture}[scale=1.3]
                \draw[red,very thick] (0,1.54)--(.81,.95);
                \draw[red,very thick] (.5,0)--(-.5,0);
                \draw[blue,very thin] (.5,0)--(.81,.95);
                \draw[blue,very thin] (.5,0)--(0,1.54);
                \draw[blue,very thin] (.5,0)--(-.81,.95);
                \draw[blue,very thin] (-.5,0)--(.81,.95);
                \draw[blue,very thin] (-.5,0)--(0,1.54);
                \draw[blue,very thin] (-.5,0)--(-.81,.95);
                \draw[blue,very thin] (-.81,.95)--(.81,.95);
                \draw[blue,very thin] (-.81,.95)--(0,1.54);
                \filldraw[black] (.5,0) circle (2pt) node[anchor=north west] {3};
                \filldraw[black] (.81,0.95) circle (2pt) node[anchor=south west] {2};
                \filldraw[black] (0,1.54) circle (2pt) node[anchor=south] {1};
                \filldraw[black] (-.81, 0.95) circle (2pt) node[anchor = south east] {5};
                \filldraw[black] (-.5, 0) circle (2pt) node[anchor=north east] {4};
            \end{tikzpicture}
        \end{minipage}%
        \begin{minipage}{.7\textwidth}
            $\left[\begin{array}{@{}>{\color{blue}}c>{\color{blue}}c>{\color{blue}}c>{\color{blue}}c>{\color{blue}}c>{\color{blue}}c>{\color{blue}}c>{\color{blue}}c|>{\color{red}}c>{\color{red}}c>{\color{red}}c@{}}
            1 & 0 & 0 & 0 & 1 & 1 & 0 & 0 & 1 & 0 & 1  \\
            0 & 1 & 0 & 0 & 0 & 0 & 1 & 1 & -1 & 0 & 1 \\
            0 & 0 & 1 & 0 & 0 & -1 & -1 & 0 & 0 & 1 & -1 \\
            0 & 0 & 0 & 1 & -1 & 0 & 0 & -1 & 0 & -1 & -1
            \end{array}\right]$
        \end{minipage}
        \caption{On the left is a vertex-labeled $K_{5}$ with two red edges. On the right is its rank $4$ representation over $\mathbb{R}$, with one extra red point added. There is no monochromatic blue plane in the representation.}
        \label{fig:rank4construction}
    \end{figure}
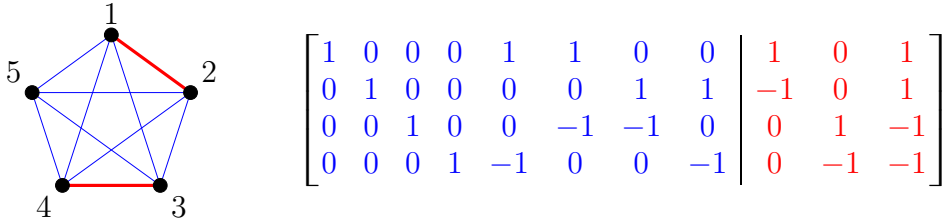

    Clearly $(R,B)$ satisfy (C1) for $M$. To verify (C2) and (C3), we use the fact that, for $\ell \geq 0$, the rank-$(k+1-\ell)$ flats of $M_{g}$ are in one-to-one correspondence with the partitions of $[k+2]$ into $\ell+1$ nonempty parts, and the elements of such a flat are the edges whose vertices belong to the same set in the partition.

    Let us first show that (C2) is satisfied; see \cref{fig:cutPlane} for an illustration.
    Let $F$ be a hyperplane of $M_{g}$ and let $(V_{1},V_{2})$ be the partition of $[k+2]$ corresponding to $F$ with $k+2 \in V_{2}$.
    The equation of $F$ is $\sum_{i \in V_1} x_{i} = 0$.
    If $F$ contains neither $r_1$ nor $r_2$, then $V_1 \cap \{1,2\} = 1$ and $v_1 \cap \{3,4\} = 1$.
    In this case, it is clear from the equation that $F$ contains $r_3$.

    \begin{figure}[h!]
            \centering
            \begin{tikzpicture}[scale=1.3]
                \draw[red,very thick, dashed] (0,1.54)--(.81,.95);
                \draw[red,very thick, dashed] (.5,0)--(-.5,0);
                \draw[blue,very thin] (.5,0)--(.81,.95);
                \draw[blue,very thin, dashed] (.5,0)--(0,1.54);
                \draw[blue,very thin, dashed] (.5,0)--(-.81,.95);
                \draw[blue,very thin, dashed] (-.5,0)--(.81,.95);
                \draw[blue,very thin] (-.5,0)--(0,1.54);
                \draw[blue,very thin] (-.5,0)--(-.81,.95);
                \draw[blue,very thin, dashed] (-.81,.95)--(.81,.95);
                \draw[blue,very thin] (-.81,.95)--(0,1.54);
                \filldraw[black] (.5,0) circle (2pt) node[anchor=north west] {3};
                \filldraw[black] (.81,0.95) circle (2pt) node[anchor=south west] {2};
                \filldraw[black] (0,1.54) circle (2pt) node[anchor=south] {1};
                \filldraw[black] (-.81, 0.95) circle (2pt) node[anchor = south east] {5};
                \filldraw[black] (-.5, 0) circle (2pt) node[anchor=north east] {4};
                \draw plot[smooth cycle] coordinates {(0.2,0) (.6,1.25) (.81,1.5) (1.3,.95) (1,0) (.8,-.4) (.4,-.4)};
                \draw plot[smooth cycle] coordinates {(-0.2,0) (.2,1.25) (.2, 1.8) (0, 2) (-1.2, 1.2) (-1.2, .8) (-.8, -.4) (-.4, -.4)};
                \filldraw[black] (1.28,1) circle (0pt) node[anchor=west] {$V_1$};
                \filldraw[black] (-1.2, 1.2) circle (0pt) node[anchor=east] {$V_2$};
                \filldraw[black] (1.2,.2) circle (0pt) node[anchor=west] {$F = \{(x_1,x_2,x_3,x_4): x_2 + x_3 = 0\}$};
            \end{tikzpicture}
        \caption{While the plane corresponding to this partition of $K_5$ doesn't contain $r_1$ or $r_2$, it does contain the point $r_3 = (1,1,-1,-1)$.}
        \label{fig:cutPlane}
    \end{figure}
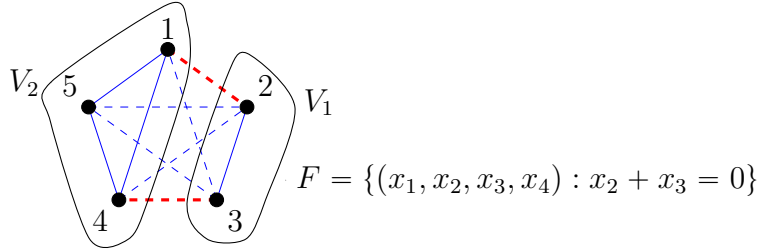
    
    Now we show that (C3) is satisfied; see \cref{fig:codim2}.
    Let $L$ be a line spanned by blue points of $M_{g}$. Then, in $K_{k+2}$, $L$ is either a disjoint pair of blue edges or a triangle with at least two blue edges. Let $i \in \{1,2\}$ and $j \in \{3,4\}$ such that $\{i,j\} \notin L$. Now choose a partition $(V_{1}, V_{2}, V_{3})$ of $[k+2]$ such that $\{i,j\} \subseteq V_{1}$ and $V_{2}$ and $V_{3}$ each contain one of the elements of $\{1,2,3,4\} \setminus \{i,j\}$, and such that no edge of $L$ is contained in $V_{1}, V_{2}$ or $V_{3}$. Let $F$ be the rank-$(k-1)$ flat corresponding to $(V_{1}, V_{2}, V_{3})$. Then $L$ is skew to $F$ by construction, and $r_{1}, r_{2} \notin F$. Furthermore, because the points of $F$ satisfy $\sum_{i \in V_{1}} x_{i} = 0$ and $\sum_{j \in V_{2}} x_{j}=0$, we have that $r_{3} \notin F$. 
    \end{proof}

    \begin{figure}[h!]
            \centering
            \begin{tikzpicture}[scale=1.3]
                \draw[red,very thick] (0,1.54)--(.81,.95);
                \draw[red,very thick] (.5,0)--(-.5,0);
                \draw[blue,very thin, dashed] (.5,0)--(.81,.95);
                \draw[blue,very thin, dashed] (.5,0)--(0,1.54);
                \draw[blue,very thin,dashed] (.5,0)--(-.81,.95);
                \draw[blue,very thick] (-.5,0)--(.81,.95);
                \draw[blue,very thin,dashed] (-.5,0)--(0,1.54);
                \draw[blue,very thin,dashed] (-.5,0)--(-.81,.95);
                \draw[blue,very thin,dashed] (-.81,.95)--(.81,.95);
                \draw[blue,very thick] (-.81,.95)--(0,1.54);
                \filldraw[black] (.5,0) circle (2pt) node[anchor=north west] {3};
                \filldraw[black] (.81,0.95) circle (2pt) node[anchor=south west] {2};
                \filldraw[black] (0,1.54) circle (2pt) node[anchor=south] {1};
                \filldraw[black] (-.81, 0.95) circle (2pt) node[anchor = south east] {5};
                \filldraw[black] (-.5, 0) circle (2pt) node[anchor=north east] {4};
                \draw plot[smooth cycle] coordinates {(0.2,0) (.6,1.25) (.81,1.5) (1.3,.95) (1,0) (.8,-.4) (.4,-.4)};
                \draw plot[smooth cycle] coordinates {(-0.2,0) (-.8,1.5) (-1.2, 1.2) (-1.2, .8) (-.8, -.4) (-.4, -.4)};
                \draw plot[smooth cycle] coordinates {(0.2,1.3) (0.2, 2) (-0.2, 2) (-0.2, 1.3)};
                \filldraw[black] (1.28,1) circle (0pt) node[anchor=west] {$V_1$};
                \filldraw[black] (0.2,1.8) circle (0pt) node[anchor=west] {$V_2$};
                \filldraw[black] (-1.2, 1.2) circle (0pt) node[anchor=east] {$V_3$};
                \filldraw[black] (1.2,.2) circle (0pt) node[anchor=west] {$F = \{(x_1,x_2,x_3,x_4): x_2 + x_3 = 0 \text{ and } x_1 = 0\}$};
            \end{tikzpicture}
        \caption{$L$ is the blue line spanned by $(1,0,0,0)$ and $(0,1,0,-1)$. The co-dimension $2$ flat $F$ is the line passing through $(0,0,0,1)$ and $(0,1,-1,0)$. $F$ is skew to $L$, and doesn't contain $r_1$, $r_2$, or $r_3$.}
        \label{fig:codim2}
    \end{figure}
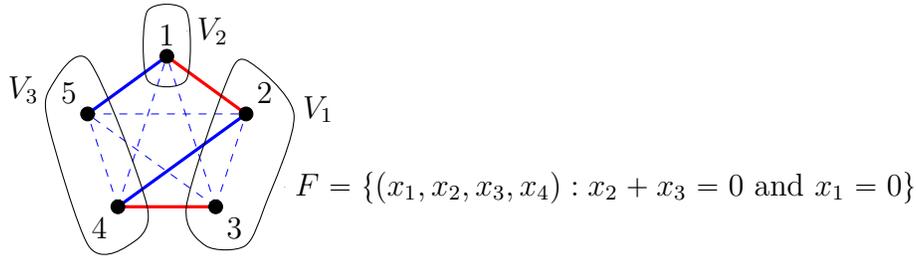

For the general lower bound on $b_{k}(t)$, we construct an arrangement where the blue points comprise several parallel copies of a planar set of points determining few directions. The red points are in the hyperplane at infinity, corresponding to the directions of lines spanned in each of the planar point sets.

\begin{proposition}\label{lower_bk}
    Let $k \geq 3$ and $t \geq 3$.
    Then, $b_k(t) \geq (k-1)t$.
    Furthermore, if $t$ is even, then $b_k(t) \geq (k-1)(t+1)$.
\end{proposition}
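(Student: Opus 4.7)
The plan is to give an explicit construction: take a planar blue point set $B_{0}$ with many points but few spanned directions, form $k-1$ translates of it in $\mathbb{R}^{k}$ lying in affinely independent parallel $2$-planes, and let the red set consist of the points at infinity corresponding to the directions of lines through pairs of points of $B_{0}$. Viewed as a rank-$(k+1)$ real-representable matroid $M$ on $R \cup B$, this construction has $|R|$ equal to the number of directions of $B_{0}$ and $|B| = (k-1)|B_{0}|$, and the bulk of the work is to verify conditions (C1)--(C3).

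Concretely, I would write $\mathbb{R}^{k} = \mathbb{R}^{2} \times \mathbb{R}^{k-2}$, choose affinely independent $p_{1}, \ldots, p_{k-1} \in \mathbb{R}^{k-2}$, set $\Pi_{\ell} = \mathbb{R}^{2} \times \{p_{\ell}\}$, and let $B = \bigcup_{\ell}(B_{0} \times \{p_{\ell}\})$. The $\Pi_{\ell}$ are $k-1$ parallel $2$-planes sharing a common line at infinity $\ell_{\infty}$, and $R \subseteq \ell_{\infty}$ is the set of directions of lines spanned by pairs of points of $B_{0}$. For the bound $b_{k}(t) \geq (k-1)t$, take $B_{0}$ to be $t-1$ collinear points together with one further point, which gives $|B_{0}| = t$ and exactly $t$ directions. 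For the sharper bound when $t = 2m$ is even, take $B_{0} = \{(0,0), (1,0), \ldots, (2m-2,0)\} \cup \{(m-1,1), (m-1,-1)\}$; a direct slope computation, using the central symmetry to pair up slopes from the two off-line points, shows that these $2m+1$ points determine exactly $2m = t$ directions (namely horizontal, vertical, and the $2m-2$ slopes $\pm 1/j$ for $j = 1, \ldots, m-1$).

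Condition (C1) is immediate. For (C2), let $H$ be a hyperplane of $M$ and let $\lambda = H \cap H_{\infty}$. If $\ell_{\infty} \subseteq \lambda$, then $R \subseteq \lambda \subseteq H$ and we are done; otherwise $\ell_{\infty} \cap \lambda$ is a single point and $H \cap \Pi_{\ell}$ is, for each $\ell$, an affine line of this common direction. If that direction were not in $R$, each such line would contain at most one point of $B_{0}$, giving $|H \cap B| \leq k-1$ and hence $r_{M}(H) \leq k-1$, contradicting that $H$ is a hyperplane.

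For (C3), given a blue line $L$ spanned by $b = (c, p_{i})$ and $b' = (c', p_{j})$, I pick blue points $b_{\ell} = (c_{\ell}, p_{\ell}) \in \Pi_{\ell}$ for $\ell = 1, \ldots, k-1$ and let $F$ be their closure. Affine independence of the $p_{\ell}$ simultaneously forces $\{b_{\ell}\}$ to be affinely independent (so $r_{M}(F) = k-1$) and the direction of $F$ to contain no vector of the form $(d, 0)$ with $d$ a planar direction (so $F \cap R = \emptyset$). Skewness $F \cap L = \emptyset$ splits into two cases: if $i = j$, I require $c_{i}$ to lie off the line through $c$ and $c'$, and if $i \neq j$, I require $c - c_{i}$ and $c' - c_{j}$ to be linearly independent in $\mathbb{R}^{2}$. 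The main obstacle is the latter case, which requires the existence of suitable anchor points $b_{i}, b_{j}$; in both cases this existence reduces to the statement that $B_{0}$ is non-collinear, which holds by construction.
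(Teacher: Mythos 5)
Your construction is the same as the paper's up to a change of coordinates: the paper places the $k-1$ copies of the planar set in homogeneous coordinates using unit-vector identifier blocks, while you place them as parallel affine $2$-planes indexed by affinely independent anchors $p_\ell \in \mathbb{R}^{k-2}$, with the red points on the shared line at infinity. Your (C2) argument via the unique direction $v = \ell_\infty \cap \lambda$ is equivalent to the paper's pigeonhole count on a blue basis of $H$, and for the even case your near-pencil $\{(j,0): 0 \le j \le 2m-2\}\cup\{(m-1,\pm 1)\}$, which determines exactly $2m$ directions, makes the appeal to the Jamison--Hill catalogue unnecessary and the proof self-contained. The one substantive difference is in (C3): the paper picks a single point $(1,a,b)\in S\setminus L'$ and takes $F$ to be the span of its $k-1$ copies, asserting skewness ``by construction''; but when $L$ joins the two copies of one point of $S$ over distinct indices $i\neq j$ (so $L'$ is a single point), that $F$ shares the direction $e_i-e_j$ with $L$ at infinity, so $L\cup F$ has rank only $k$ and $F$ is \emph{not} skew to $L$. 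Your two-case analysis, choosing $c_i\neq c$ and then $c_j$ off the line through $c'$ in direction $c-c_i$ (possible since $B_0$ is non-collinear) so that $c-c_i$ and $c'-c_j$ are linearly independent, correctly covers exactly this configuration. So your proposal is correct, uses the same construction, and is if anything more careful than the paper's own verification of (C3).
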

\begin{proof}
    Let $S$ be a set of points in the plane that determines at most $t$ directions, such that $|S|=t+1$ if $t$ is even and $|S|=t$ if $t$ is odd.
    For even $t$, there are many configurations of $t+1$ points that determine exactly $t$ directions; 
    Jamison and Hill \cite{jamison1983catalogue} described four infinite families and 102 sporadic examples that are not known to be members of any infinite families.
    For odd $t$, we simply use a construction for $t-1$.
    
    We use projective coordinates to describe the blue points $B$ and red points $R$.
    Roughly speaking, $B$ is the union of $k-1$ parallel copies of $S$, and $R$ is the set points at infinity corresponding to the directions determined by $S$. Specifically, let $B = \bigcup_{i \in [k-1]}B_i$, where
    \[B_i = \{(1,a,b,x_1,\ldots,x_{k-1}): (1,a,b) \in S, x_i=1, x_j =0 \text{ for } j \neq i\}, \]
    and let
    \[R=\{(0,a-a',b-b',0,\ldots,0):(1,a,b), (1,a',b') \in S, (a,b) \neq (a',b')\}.\]
    Now $|R|=t$ and $|B| = (k-1)|S|$. Observe that, for any $x,y \in B_{i}$, the point $x-y$ is in the line spanned by $x$ and $y$, and $x-y \in R$.

    Now we show that $(R,B)$ satisfies (C1), (C2) and (C3) (for the matroid $R \cup B$ represents). (C1) is immediate. For (C2), let $H$ be a rank-$k$ flat spanned by an independent set $I \subseteq B$. By the pigeonhole principle, $|I \cap B_{i}| \geq 2$ for some $i \in [k-1]$, and so $H$ contains a red point. For (C3), let $L$ be a line spanned by points of $B$, and let $L'$ be the projection of $L$ onto the first three coordinates. Since the points of $S$ are not all collinear, there is a point $(1,a,b) \in S \setminus L'$.
    Let $I$ be the set of $k-1$ points in $B$ with first three coordinates $(1,a,b)$, and let $H$ be the rank-$(k-1)$ flat spanned by $I$. Then $H$ is skew to $L$ by construction, and it does not contain any red points because every point in $H$ is non-zero in one of its last $k-1$ coordinates.
\end{proof}

It is easy to see that $b_{2}(2) = 0$, since at least three red points are needed to hit the three lines spanned by three non-collinear blue points. Now we show that $b_{k}(2) = 0$ for all $k$. The proof of this fact does not depend in any way on the representability of $M$.  

\begin{proposition}
For $k \geq 2$, let $M$ be a simple rank-$(k+1)$ matroid. If $(R,B)$ is a $2$-colouring of $M$ satisfying (C1), (C2) and (C3), then $|R| \geq 3$.
\end{proposition}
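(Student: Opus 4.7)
The plan is to rule out $|R| \in \{0,1,2\}$ in turn. The cases $|R|=0$ and $|R|=1$ will follow directly from (C2) and the simplicity of $M$; the main work is in $|R|=2$, which I will handle via a matroid-connectivity argument combined with (C3).

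For $|R|=0$: since $M$ is simple of rank $k+1 \ge 3$, it admits hyperplanes (for instance, the closure of any $k$-element independent subset), and any such hyperplane violates (C2). For $|R|=1$ with $R=\{r\}$: (C2) forces $r$ to lie in every hyperplane of $M$, but in any matroid the intersection of all hyperplanes equals $\cl_M(\emptyset)$, the set of loops---and a simple matroid has none.

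For the main case $|R|=2$, write $R=\{r_1,r_2\}$. Taking complements, (C2) is equivalent to the statement that no cocircuit of $M$ contains both $r_1$ and $r_2$. Invoking the standard matroid fact that two distinct elements lie in a common cocircuit if and only if they lie in the same connected component, I conclude that $r_1$ and $r_2$ lie in distinct components, and may therefore write $M = M_1 \oplus M_2$ with $r_i \in E(M_i)$. I next use (C3): for any line $L$ of $M|B$ with its associated blue rank-$(k-1)$ flat $F$, we have $r(L \cup F) = 2+(k-1) = k+1$, so $L \cup F \subseteq B$ spans $M$; in particular $B \cap E(M_i)$ spans $M_i$ for each $i$. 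If some $r(M_i)=1$, then simplicity forces $E(M_i)=\{r_i\}$ and $B \cap E(M_i) = \emptyset$ cannot span $M_i$, so both summands have rank at least $2$.

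To finish, I pick a line $L$ of $M_1|(B \cap E(M_1))$, which exists and is also a line of $M|B$ because closures in $M$ and in $M_1$ agree on subsets of $E(M_1)$. I apply (C3) to $L$ to obtain a blue rank-$(k-1)$ flat $F$ skew to $L$, and decompose $F = F_1 \cup F_2$ as a flat of $M_1 \oplus M_2$. Since $L \cup F_1 \subseteq E(M_1)$, the identity $r_{M_1}(L \cup F_1) + r_{M_2}(F_2) = k+1 = r(M_1)+r(M_2)$ together with $r_{M_j}(\cdot) \le r(M_j)$ forces $r_{M_2}(F_2)=r(M_2)$; as $F_2$ is a flat of $M_2$ spanning $M_2$, this yields $F_2 = E(M_2) \ni r_2$, contradicting $F \cap R = \emptyset$. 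The main obstacle I anticipate is recognizing the connectivity reformulation of (C2) under $|R|=2$; after that, the contradiction is a clean rank count inside each direct summand.
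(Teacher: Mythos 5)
Your proof is correct, and it takes a genuinely different and cleaner route than the paper's for the main case $|R|=2$. The paper splits into two cases according to whether $r_1$ and $r_2$ lie in the same connected component: when they do, it takes a circuit through both, extends its blue part to a blue basis, and locates a common element of the two fundamental circuits whose removal from the basis yields a flat (hence a hyperplane) avoiding $R$; when they do not, it first argues that each side of the separation has at least three elements, then picks two blue elements $e,f$ on one side and shows that among $\cl(F\cup\{e\})$ and $\cl(F\cup\{f\})$ (for $F$ a blue rank-$(k-1)$ flat skew to the line through $e,f$) at least one misses $R$. You instead recognize up front that, for $|R|=2$, (C2) is exactly the statement that no cocircuit contains both reds, and then invoke the duality between the circuit-components of $M$ and of $M^*$ to conclude directly that $r_1,r_2$ lie in distinct components. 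This collapses the paper's same-component case into a one-line appeal to a standard fact. Your treatment of the direct-sum case is also tighter: rather than the paper's two-hyperplane argument, you take one blue line $L\subseteq E(M_1)$, one skew blue rank-$(k-1)$ flat $F$, and read off from $r_{M_1}(L\cup F_1)+r_{M_2}(F_2)=k+1=r(M_1)+r(M_2)$ that $F_2$ is a spanning flat of $M_2$, hence all of $E(M_2)$, hence contains $r_2$ --- a single rank count replacing a more delicate hyperplane-avoidance argument. You also handle $|R|\in\{0,1\}$ explicitly (the paper leaves these implicit), which is a minor but welcome addition. The only thing worth flagging is that the ``common cocircuit iff same component'' fact is really the composite of two standard results (the circuit characterization of components, and the coincidence of the components of $M$ and $M^*$); citing that chain would make the appeal airtight, but as used it is entirely correct.
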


\begin{proof}
Suppose to the contrary that $R = \{r_{1}, r_{2}\}$. Assume first that $r_{1}$ and $r_{2}$ are in different connected components of $M$, and let $(X_{1}, X_{2})$ be a separation of $M$ with $r_{1} \in X_{1}$ and $r_{2} \in X_{2}$. By (C1) and (C3), $r(B) = r(M)$, which means that $|X_{1}|, |X_{2}| \geq 2$. We claim that $|X_{1}|, |X_{2}| \geq 3$. Indeed, suppose that $X_{1} = \{r_{1}, e\}$ for some $e \in B$. Choose $f \in B \cap X_{2}$, and let $F$ be a monochromatic-blue rank-$(k-1)$ flat in $M$ skew to $\{e,f\}$. Since $r(F \cap X_{2}) \leq r(X_{2})-1 = k-2$, we have $r_{1} \in F$, a contradiction. Thus $|X_{1}|, |X_{2}| \geq 3$. Now let $e,f \in X_{1} \cap B$, and let $F$ be a monochromatic-blue rank-$(k-1)$ flat in $M$ skew to $\{e,f\}$. Then $r_{2}$ is in neither of the hyperplanes $\cl(F \cup \{e\})$ and $\cl(F \cup \{f\})$, and $r_{1}$ is in at most one of them. This contradicts (C2).

Now assume that $r_{1}$ and $r_{2}$ are in the same connected component of $M$, and let $C$ be a circuit containing both of them. Let $I$ be a basis of $M|B$ containing $C \del \{r_{1}, r_{2}\}$, and let $C_{1}$ and $C_{2}$ be the fundamental circuits of $I \cup \{r_{1}\}$ and $I \cup \{r_{2}\}$ respectively. By choice of $I$, there exists $e \in C_{1} \cap C_{2}$. But then $r_{1}, r_{2} \notin \cl(I \del \{e\})$, contrary to (C2).
\end{proof}

We finish this section by showing that $b_{k}(t) \leq O_{k}(t)$, thus completing the proof of Theorem \ref{bk_bound}. The proof relies on the matroidal operation of {\it{principal truncation of a line}}. Geometrically, this is the operation of centrally projecting from a generic point on a line. (This operation is defined matroidally and in full generality in Section 3.) 
The proof also relies on two classic theorems from discrete geometry, the first of which is the Weak Dirac Theorem.

\begin{dirac}\cite{Beck, SzemerediTrotter}
There is a number $c_{d} > 0$ such that, if $M$ is a simple real-representable matroid on $n$ elements and $r(M) \geq 3$, then there is an element of $M$ incident to at least $c_{d}n$ lines.
\end{dirac}

The Weak Dirac Theorem follows from Beck's theorem, and hence holds for any $\gamma$-Beck class of matroids, with $c_d$ depending on $\gamma$.

As an historical aside, the question of determining the largest possible value of $c_d$ is classical.
For example, the Strong Dirac Conjecture posits that, if $M$ is a real-representable matroid on $n$ elements with $r(M) \geq 3$, then there is an element of $M$ incident to at least $n/2 - O(1)$ lines.
The best-known result in this direction is that there is always a point incident to  $\lceil n/3 \rceil +1$ lines.
This follows from a result of Langer in algebraic geometry \cite{langer2003logarithmic, han2017note}. 
This result holds for complex-representable matroids and is tight in that setting. 
The situation for orientable matroids is less satisfactory since Langer's theorem does not apply.
It is known that there is an infinite family of rank-$3$ orientable matroids such that each element is incident to at most $(4/9)n$ lines \cite{lund2014pseudoline}, so the oriented matroid analog to the Strong Dirac Conjecture is not true.

The second classic theorem that we use is a high-dimensional extension of Beck's theorem on the number of lines, also proved by Beck in \cite{Beck}.

\begin{beck}\cite{Beck}
For each $k \geq 2$, there exist $\epsilon_{k}, \gamma_{k} \in (0,1)$ such that, if $M$ is a simple rank-$(k+1)$ real-representable matroid on $n$ elements, then either
\begin{enumerate}
    \item there is a hyperplane in $M$ of size at least $\epsilon_{k}n$, or
    \item $M$ has at least $\gamma_{k}n^{k}$ hyperplanes.
\end{enumerate}
\end{beck}

In fact, for any $\epsilon_k < (1/k) \lfloor k/2 \rfloor$, there is a constant $\gamma_k$ such that Beck's Theorem holds for those $\epsilon_k, \gamma_k$ \cite{lund2021two}.

Our proof that $b_{k}(t) \leq O_{k}(t)$ proceeds as follows. We consider a $2$-colouring $(R,B)$ of a rank-$(k+1)$ real-representable matroid $M$ satisfying (C1), (C2) and (C3), and show that $|B| \leq O_{k}(|R|)$ by induction on $k \geq 2$. Using the Weak Dirac Theorem, we start by contracting a blue element incident to $\Omega(|B|)$ lines in $M$. Then we colour the resulting matroid in the natural way, where the red points correspond to lines containing a red element. This colouring satisfies (C1) and (C2), so if it satisfies (C3), then we win by induction. If it does not, then we can principally truncate a line to obtain a matroid which (with the natural resulting colouring) again satisfies (C1) and (C2). Using part (ii) of the following lemma, we can repeat this process until we obtain a bicoloured truncated matroid satisfying (C3). 

\begin{lemma}\label{gmep_helpful}
For $k \geq 2$, let $M$ be a simple rank-$(k+1)$ matroid and let $L$ be a line of $M$. Let $(R,B)$ be a $2$-colouring of $M$ such that every hyperplane of $M$ contains a red element.
\begin{enumerate}
    \item If there is a monochromatic-blue rank-$(k-1)$ flat skew to $L$, then $|L| \leq |R|$.
    \item If every rank-$(k-1)$ flat skew to $L$ contains a red element, then every hyperplane in the principal truncation of $L$ contains a red element.
\end{enumerate}
\end{lemma}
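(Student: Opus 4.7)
The plan is to treat the two parts separately, reducing part (i) to an injection argument and part (ii) to a classification of the hyperplanes of $T_L(M)$.

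For part (i), fix a monochromatic-blue rank-$(k-1)$ flat $F$ skew to $L$. For each $e \in L$, since $F$ is skew to $L$ we have $e \notin \cl_M(F) = F$, so $H_e := \cl_M(F \cup \{e\})$ has rank $k$ and is a hyperplane of $M$. By the standing hypothesis, $H_e$ meets $R$ in some point $r_e$, which lies in $H_e \setminus F$ because $F$ is blue. The heart of the argument is to show that the map $e \mapsto r_e$ is injective, which immediately gives $|L| \leq |R|$. Suppose $r_e = r_{e'} = r$ for distinct $e, e' \in L$. Since $r \notin F$, the flat $\cl_M(F \cup \{r\})$ has rank $k$, and it is contained in both rank-$k$ flats $H_e$ and $H_{e'}$, so it equals each of them. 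Hence $e, e' \in \cl_M(F \cup \{r\})$, forcing $L = \cl_M(\{e,e'\}) \subseteq \cl_M(F \cup \{r\})$. Then $F \cup L$ lies in a flat of rank $k$, contradicting that $F$ is skew to $L$.

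For part (ii), the plan is to describe the hyperplanes of $T_L(M)$ explicitly and check each one meets $R$. Using the standard description $T_L(M) = (M + p)/p$ with $p$ placed freely on $L$, one computes that $r_{T_L(M)}(S) = r_M(S) - 1$ when $L \subseteq \cl_M(S)$ and $r_{T_L(M)}(S) = r_M(S)$ otherwise. Combined with the requirement that a hyperplane of $T_L(M)$ be closed there, this forces the hyperplanes of $T_L(M)$ to be exactly (a) the rank-$k$ flats of $M$ containing $L$, and (b) the rank-$(k-1)$ flats of $M$ skew to $L$. Family (a) consists of hyperplanes of $M$, so each meets $R$ by the standing hypothesis, while family (b) meets $R$ by the hypothesis of part (ii). This completes part (ii).

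The main bookkeeping obstacle is the classification step in part (ii): one must rule out rank-$(k-1)$ flats of $M$ that meet $L$ in exactly one point. Such a flat $S$ is not closed in $T_L(M)$, because for $x \in L \setminus S$ the set $S \cup \{x\}$ already spans $L$ in $M$ and hence has the same truncated rank as $S$. So $\cl_{T_L(M)}(S)$ swallows all of $L$ and in fact coincides with a rank-$k$ flat of $M$ through $L$, already accounted for in family (a). Once this bookkeeping is confirmed, both conclusions follow from the arguments above.
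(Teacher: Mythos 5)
Your proof is correct and follows the same approach as the paper's, which simply asserts the two key observations (that each $\cl_M(F\cup\{e\})$ is a hyperplane hitting $R$, and that the hyperplanes of $T_L(M)$ are the rank-$k$ flats of $M$ containing $L$ together with the rank-$(k-1)$ flats skew to $L$) without further elaboration. You fill in the implicit injectivity argument for (1) and the verification of the hyperplane classification for (2); for completeness you might also note the case of a rank-$(k-1)$ flat disjoint from $L$ but not skew to it, which is excluded by the same reasoning since any $x\in L$ already makes $S\cup\{x\}$ span $L$.
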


\begin{proof}
For (1), observe that if there is a rank-$(k-1)$ flat $F$ skew to $L$ with $F \cap R = \emptyset$, then for all $e \in L$, $\cl_{M}(F \cup \{e\}) \cap R \neq \emptyset$, and so $|L| \leq |R|$. For (2), observe that the set of hyperplanes of the principal truncation of $L$ consists precisely of the hyperplanes of $M$ containing $L$ and the rank-$(k-1)$ flats of $M$ skew to $L$.
\end{proof}

There is only one problem with the argument described above. Namely, in order to derive the desired bound on $|B|$ after applying induction, we need the number of blue points in the truncated matroid to be $\Omega(|B|)$. We ensure this by initially bounding the size of each flat in $M$ by $O(|B|)$. This will bound the number of blue points in each truncated line as needed.

\begin{proof}[Proof of Theorem \ref{bk_bound}.]
The lower bound follows by Proposition \ref{lower_bk}. For the upper bound, we proceed by induction on $k \geq 1$. By definition, $b_{1}(t) = 0$ for all $t \geq 1$. Now let $k \geq 2$ and assume that, for all $i \in \{1, \dots, k-1\}$, there is a constant $C_{i} \geq 0$ such that $b_{i}(t) \leq C_{i}t$ for all $t \geq 1$. Let $M$ be a simple rank-$(k+1)$ real-representable matroid, and let $(R,B)$ be a $2$-colouring of $M$ satisfying (C1), (C2) and (C3). Let $t = |R|$. 

Using the constants $c_{d}$ from the Weak Dirac Theorem and $\epsilon_{i}$ and $\gamma_{i}$ from Beck's Theorem for Hyperplanes, we define $\delta_{k} = (1/4k)c_{d}$, and for each $i \in \{1, \dots, k-1\}$, we recursively define $\delta_{i-1} = (1/2)\epsilon_{i-1}\delta_{i}$. We may assume that $|B| \geq \max \{ \frac{2t}{c_{d}\gamma_{i-1}\delta_{i}^{i-1}} : 2 \leq i \leq k \}$, where we set $\gamma_{1}=1$. 

\begin{claim}\label{flat_bound}
For each $i \in \{2, \dots, k\}$, every rank-$i$ flat of $M|B$ has size at most $\delta_{i}|B|$. 
\end{claim}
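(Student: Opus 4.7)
The plan is to prove this claim by upward induction on $i$, with Beck's Theorem for Hyperplanes driving the inductive step. For the base case $i = 2$, any rank-$2$ flat of $M|B$ is a blue line; (C3) together with Lemma \ref{gmep_helpful}(1) bounds its size by $t$, which in turn is at most $c_d \delta_2 |B|/2 < \delta_2|B|$ by the hypothesis $|B| \geq 2t/(c_d\delta_2)$ (using $\gamma_1 = 1$).

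For the inductive step $i \geq 3$, I suppose for contradiction that some rank-$i$ flat $F$ of $M|B$ satisfies $|F| > \delta_i|B|$ and apply Beck's Theorem for Hyperplanes to the rank-$i$ matroid $M|F$. If some hyperplane of $M|F$ has size at least $\epsilon_{i-1}|F|$, then this rank-$(i-1)$ flat of $M|B$ has more than $2\delta_{i-1}|B|$ elements, violating the induction hypothesis via the recursion $\delta_{i-1} = \epsilon_{i-1}\delta_i/2$. Otherwise, $M|F$ has at least $\gamma_{i-1}|F|^{i-1}$ hyperplanes, and this is the main case.

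To handle it, note that (C3) forces $r_M(B) = k+1$ (any blue line would otherwise admit no red-free rank-$(k-1)$ flat skew to it), so I can fix $k - i + 1$ blue elements $y_1, \dots, y_{k-i+1} \in B \setminus F$ that are independent modulo $F' := \cl_M(F)$. For each rank-$(i-1)$ flat $H$ of $M|F$, a submodularity computation gives that $H^* := \cl_M(H \cup \{y_j\}_j)$ is a hyperplane of $M$ with $H^* \cap F' = H$, so the map $H \mapsto H^*$ is injective. By (C2), each $H^*$ contains a red point, which must lie in $R \setminus F'$ since $H$ is blue. A second submodularity computation shows that, for every $r \in R \setminus F'$, the flat $K_r := \cl_M(\{r\} \cup \{y_j\}_j)$ intersects $F'$ in a single point $p(r)$, and one checks that $r \in H^*$ if and only if $p(r) \in H$. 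Since each point of $F$ is contained in at most $\binom{|F|-1}{i-2} \leq |F|^{i-2}$ rank-$(i-1)$ flats of $M|F$, the total number of hyperplanes of $M|F$ is bounded above by $t \cdot |F|^{i-2}$; combining this with the lower bound $\gamma_{i-1}|F|^{i-1}$ and the hypothesis $|B| \geq 2t/(c_d\gamma_{i-1}\delta_i^{i-1})$ yields $|F| \leq c_d \delta_i^{i-1}|B|/2 < \delta_i|B|$, the desired contradiction.

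The hard part will be the two submodularity computations above---verifying that $H^* \cap F' = H$ and that $K_r \cap F'$ is exactly a rank-$1$ flat---together with a small amount of care for the subcase where $p(r) \in R \cap F'$ (in which case $r$ simply contributes no hyperplanes to the count, only strengthening the bound).
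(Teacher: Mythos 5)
Your overall strategy---bounding the size of a rank-$i$ flat $F$ by applying Beck's Theorem for Hyperplanes to $M|F$ and then showing that too few red points are available to cover the many hyperplanes of $M|F$---is the same as the paper's, and your inductive setup is equivalent to the paper's ``choose $i$ minimal'' formulation. The lifting $H \mapsto H^{*} = \cl_{M}(H \cup \{y_{j}\}_{j})$ is a hand-rolled version of the paper's contraction to $M/P$, so the two proofs are close in spirit. However, there is a genuine gap in your counting step.

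You pick $k-i+1$ blue elements $y_{1}, \dots, y_{k-i+1}$ independent modulo $F' = \cl_{M}(F)$, but you never ensure that $\cl_{M}(\{y_{j}\}_{j})$ is monochromatic blue. If some red $r$ lies in $\cl_{M}(\{y_{j}\}_{j})$, then $r \in H^{*}$ for \emph{every} hyperplane $H$ of $M|F$, while the submodularity computation gives $K_{r} \cap F' = \emptyset$, so $p(r)$ is undefined. Such an $r$ is assigned to all $\gamma_{i-1}|F|^{i-1}$ hyperplanes rather than at most $|F|^{i-2}$ of them, and the bound you are trying to derive collapses. This is precisely where (C3) is needed for more than just $r_{M}(B) = k+1$: the paper takes a line $L$ of $M|F$, uses (C3) to find a monochromatic-blue rank-$(k-1)$ flat skew to $L$, and extracts from it a monochromatic-blue rank-$(k+1-i)$ flat $P$ skew to $F$; your $y_{j}$'s would need to be chosen as a basis of such a $P$. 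Without this, there is no reason a red-free choice of $\{y_{j}\}_{j}$ exists.

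A secondary but real issue: the assertion that the red point in $H^{*}$ ``must lie in $R \setminus F'$ since $H$ is blue'' is false. One only has $H^{*} \cap F' = \cl_{M}(H)$, and since $F$ is a flat of $M|B$ (not of $M$), the set $\cl_{M}(H) \subseteq F'$ may contain red points. Your final sentence gestures at a subcase $p(r) \in R \cap F'$, but dismissing it as contributing ``no hyperplanes'' is wrong: those $r$ do contribute hyperplanes $H$ with $p(r) \in \cl_{M}(H) \setminus H$, and these must be counted. In the paper's formulation this is handled automatically, since the count is over all reds $r \notin P$ and uses that each $r$ is spanned by at most $|B|^{i-2}$ hyperplanes of $M|F$ in $M/P$, with no case split. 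Both of your issues are repairable, but repairing the first requires importing exactly the use of (C3) that the paper makes, so the proposal as written does not constitute a complete proof.
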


\begin{proof}[Proof of claim.]
Suppose otherwise, and choose $i \in \{2, \dots, k\}$ as small as possible such that there exists a rank-$i$ flat $F$ of $M|B$ with $|F| \geq \delta_{i}|B|$. By Lemma \ref{gmep_helpful}, every line of $M|B$ has size at most $t < \delta_{2}|B|$, and so $i \geq 3$. By choice of $i$, every hyperplane of $M|F$ has size at most $\delta_{i-1}|B| < \epsilon_{i-1}|F|$. Hence, by Beck's Theorem for Hyperplanes, there are at least $\gamma_{i-1}\delta_{i}^{i-1}|B|^{i-1}$ hyperplanes in $M|F$. 

By (C3), there is a monochromatic-blue rank-$(k+1-i)$ flat $P$ in $M$ which is skew to $F$. To see this, take a line $L$ of $M|F$ and a monochromatic-blue rank-$(k-1)$ flat $F'$ skew to $L$. Then let $P = \cl_{M}(I)$, where $I$ is a maximal independent subset of $F' \del F$ in $M \con F$. Now $(M \con P)|F = M | F$, and every hyperplane of $M \con P$ contains a red element. Since each red element is spanned by at most $|B|^{i-2}$ hyperplanes of $M|F$ in $M \con P$, we have that there are at least
\begin{align*}
    \gamma_{i-1}\delta_{i}^{i-1}|B|^{i-1} - t|B|^{i-2} = \Big( \gamma_{i-1}\delta_{i}^{i-1} - \frac{t}{|B|} \Big)|B|^{i-1} > 0
\end{align*}
hyperplanes in $M \con P$ containing no red element, a contradiction.
\end{proof}

By the Weak Dirac Theorem, there exists $e \in B$ incident to at least $c_{d}|B|$ lines in $M|B$. Let $M'$ be a simplification of $M \con e$, and let $(R',B')$ be a $2$-colouring of $M'$ where $f \in R'$ if and only if $\cl_{M}(\{e,f\}) \cap R \neq \emptyset$. Then $(R',B')$ satisfies (C1) and (C2). Also $|R'| \leq t$ and $|B'| \geq c_{d}|B| - t \geq \frac{1}{2}c_{d}|B|$. Henceforth, we refer to the elements of $R'$ as {\it{red}} and the elements of $B'$ as {\it{blue}}. 

We have the following as an immediate consequence of Claim \ref{flat_bound}.

\begin{claim}\label{con_flat_bound}
For $1 \leq i \leq k-1$, if $F$ is a rank-$i$ flat of $M'|B'$, then $|F| \leq \frac{2\delta_{i}}{c_{d}}|B'|$. 
\end{claim}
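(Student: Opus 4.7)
The plan is to reduce the claim to Claim \ref{flat_bound} by lifting $F$ back through the contraction and simplification that produced $M'$. Setting $F^{*} = \cl_{M}(F \cup \{e\})$, I would first check that $F^{*}$ is a rank-$(i+1)$ flat of $M$ containing the blue point $e$: a rank-$i$ flat of $M'$ corresponds to a rank-$i$ flat of $M \con e$ (since simplification preserves the lattice of flats), which lifts to the rank-$(i+1)$ flat $F^{*}$ of $M$ through $e$.

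Next, I would argue that $F^{*} \cap B$ is a rank-$(i+1)$ flat of $M|B$. It is automatically a flat of $M|B$, since $\cl_{M|B}(X) = \cl_{M}(X) \cap B$ for any $X \subseteq B$. To see it has rank exactly $i+1$, I would take a basis $\{f_{1}, \dots, f_{i}\}$ of $F$ in $M'|B'$, replace each $f_{j}$ with a blue representative of its parallel class in $M \con e$ (which exists because $f_{j} \in B'$ means the corresponding line through $e$ in $M$ is blue-only), and observe that $\{e, f_{1}, \dots, f_{i}\} \subseteq F^{*} \cap B$ is independent in $M$.

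I would then establish the counting inequality $|F| \leq |F^{*} \cap B|$: each element of $F$ represents a distinct parallel class in $M \con e$ whose underlying line through $e$ contains only blue elements, so selecting a non-$e$ blue representative from each such line produces $|F|$ distinct elements of $F^{*} \cap B$.

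Finally, since $2 \leq i+1 \leq k$, I would invoke Claim \ref{flat_bound} to deduce an upper bound on $|F^{*} \cap B|$ of the form $\delta_{\star}|B|$, and combine this with the earlier bound $|B'| \geq \tfrac{1}{2}c_{d}|B|$ to convert it into the desired inequality in terms of $|B'|$. The main obstacle will be cleanly pinning down the matroid-theoretic correspondence between flats of $M'|B'$ and flats of $M|B$ containing $e$ under contraction and simplification; once this is in place, the claim is an immediate corollary of Claim \ref{flat_bound} together with the relation between $|B|$ and $|B'|$.
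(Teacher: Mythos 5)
Your approach is the right one, and it is essentially the argument the paper has in mind (the paper offers no proof, calling the claim ``an immediate consequence'' of Claim~\ref{flat_bound}): lift $F$ to the rank-$(i+1)$ flat $F^{*} = \cl_{M}(F\cup\{e\})$ of $M$, observe that $F^{*}\cap B$ is a rank-$(i+1)$ flat of $M|B$, apply Claim~\ref{flat_bound}, and convert via $|B'|\geq\tfrac{1}{2}c_{d}|B|$. One simplification: since $B'\subseteq B$, every $f\in F$ already lies in $F^{*}\cap B$, so the injection is just the inclusion $F\subseteq F^{*}\cap B$; you do not need to reselect blue representatives of the parallel classes.

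There is one index subtlety you have glossed over with your ``$\delta_{\star}$''. Claim~\ref{flat_bound} applied to the rank-$(i+1)$ flat $F^{*}\cap B$ yields $|F^{*}\cap B|\leq\delta_{i+1}|B|$, and hence $|F|\leq\tfrac{2\delta_{i+1}}{c_{d}}|B'|$ --- with $\delta_{i+1}$, not $\delta_{i}$. Since the $\delta_{j}$ are strictly increasing in $j$ (each $\delta_{j-1}=\tfrac{1}{2}\epsilon_{j-1}\delta_{j}<\delta_{j}$), the bound stated in Claim~\ref{con_flat_bound} is strictly stronger than what this argument delivers, and the argument does not actually establish $\tfrac{2\delta_{i}}{c_{d}}|B'|$. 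This appears to be an off-by-one typo in the paper: the subsequent application bounds the blue part of each truncated line --- a flat of $M'|B'$ of rank at most $k-1$ --- by $\tfrac{2\delta_{k}}{c_{d}}|B'|$, which is exactly the ``$\delta_{i+1}$'' form of the claim with $i\leq k-1$, not the ``$\delta_{i}$'' form. So your derivation gives the bound that is actually used; just be explicit that the exponent you obtain is $i+1$, and note the discrepancy with the displayed statement.
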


By repeated application of Lemma \ref{gmep_helpful}, there is a matroid $M''$, obtained from $M'$ by principally truncating $\ell \leq k-2$ lines in turn, such that every hyperplane of $M''$ contains a red element, and, for every line of $M''$ containing two monochromatic-blue points, there is a monochromatic-blue rank-$(r(M'')-2)$ flat of $M''$ skew to it. Let $B''$ be the set of all blue elements which are not contained in a non-trivial parallel class of $M''$. (Equivalently, $B''$ is the set of blue elements not contained in one of the lines which were truncated to obtain $M''$.) For each line which was principally truncated to obtain $M''$, its set of blue elements is a flat of $M'|B'$ of rank at most $k-1$. Therefore, by Claim \ref{con_flat_bound},
\begin{align*}
    |B''| \geq |B'| - \frac{2 \ell \delta_{k}}{c_{d}}|B'| \geq \frac{1}{2} |B'| \geq \frac{c_{d}}{4}|B|.
\end{align*}
By induction, $|B''| \leq C_{k - \ell}t$. The result follows.
\end{proof}

We remark that the preceding theorem leads to a bound of $b_k(t) \leq c^k t$ for some constant $c$.
Our general lower bound is $b_k(t) \geq (k-1)t$, so there remains a substantial gap, especially for large $k$.

\section{Principal truncation and stratification}

In the proof of the Main Theorem, we invoke the inductive hypothesis by reducing to a smaller matroid using the operation of principal truncation. Geometrically, this corresponds to placing a point in general position inside a specified flat, and then centrally projecting at this  point. Let $M$ be a matroid and let $F$ be a set in $M$ of rank at least two. The {\it{principal truncation of $F$ in $M$}} is the matroid with $N$ ground set $E(N) = E(M)$ and rank function
\begin{align*}
    r_{N}(X) = 
    \begin{cases}
        r_{M}(X) - 1 \ \ {\mathrm{if}} \ F \subseteq \cl_{M}(X) \\
        r_{M}(X) \ \ \ \ \ \ \ {\mathrm{otherwise}}.
    \end{cases}
\end{align*}
Equivalently, $N$ is the matroid obtained from $M$ by freely extending in $\cl_{M}(F)$ by a new element and then contracting this element. (This means that a $\gamma$-Beck class is closed under principal truncation.) Note that if $M$ is a simple matroid, then so is $N$, unless $r_{M}(F)=2$, in which case $\cl_{M}(F)$ is a point in $N$, and $N \del \cl_{M}(F)$ is simple. For any $1 \leq k \leq r_{M}(F) - 1$, this operation may be reiterated $k$ times; we refer to this as the {\it{$k$-fold principal truncation}} of $F$ in $M$. The $(r_{M}(F)-1)$-fold principal truncation of $F$ in $M$ is called the {\it{complete principal truncation}} of $F$ in $M$, which we denote by $M \div F$. 

We begin by laying out some standard properties of principal truncation which will be particularly helpful to us. We will prove these properties in full, although they are well-known consequences of the fact that principal truncation is a quotient operation on a matroid, see \cite[Section 7.3]{oxley2006matroid}. 

\begin{proposition}\label{helpful}
Let $M$ be a matroid, and let $F \subseteq E(M)$ with $2 \leq r_{M}(F) \leq r(M)-2$. Let $N$ be the $1$-fold principal truncation of $F$ in $M$.
\begin{enumerate}
    \item For $X \subseteq E(M)$, if $F \nsubseteq \cl_{N}(X)$, then $F \nsubseteq \cl_{M}(X)$. Consequently, if $F \nsubseteq \cl_{M \div F}(X)$, then $F \nsubseteq \cl_{M}(X)$.
    \item If $H$ is a flat in $N$, then $H$ is a flat in $M$. Consequently, if $H$ is a flat in $M \div F$, then $H$ is a flat in $M$.
    \item If $H$ is a flat in $M \div F$ which does not contain $F$, then $H$ is a rank-$r_{M \div F}(H)$ flat in $M$ which is skew to $F$.
    \item If $H$ is a hyperplane in $M$ which does not contain $F$, then there exists $X \subseteq H$ such that $X$ is a hyperplane in $M \div F$ (which is disjoint from $F$).
\end{enumerate}
\end{proposition}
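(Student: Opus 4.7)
My plan is to first derive a single uniform rank formula for the $k$-fold principal truncation $N_k$ of $F$ in $M$, and then deploy it for each of the four parts. A short induction on $k$ from the given recursion yields
\[
    r_{N_k}(X) = \min\{r_M(X),\, r_M(X \cup F) - k\} \qquad (0 \le k \le r_M(F) - 1),
\]
so that $M \div F$ has rank function $r_{M \div F}(X) = \min\{r_M(X),\, r_M(X \cup F) - (r_M(F)-1)\}$. Part (1) then follows from the formula applied to $N$: if $F \subseteq \cl_M(X)$ then $r_M(X) = r_M(X \cup F)$, so $r_N(X) = r_N(X \cup \{f\}) = r_M(X) - 1$ for every $f \in F$, i.e.\ $F \subseteq \cl_N(X)$; the identical computation with the $M \div F$ formula gives the consequence. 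For part (2), let $H$ be a flat of $N$ and $e \notin H$, and split on whether $F \subseteq \cl_M(H)$. If yes, the formula gives $r_N(H) = r_M(H) - 1$ and $r_N(H \cup \{e\}) = r_M(H \cup \{e\}) - 1$, so the strict increase transfers directly. If no, a parallel case split shows that if $e \in \cl_M(H)$ then $r_N(H \cup \{e\}) = r_N(H)$, contradicting flatness of $H$ in $N$; hence $e \notin \cl_M(H)$, giving $r_M(H \cup \{e\}) > r_M(H)$ as required. The $M \div F$ consequence is analogous.

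Part (3) is where the structure starts to bite. Let $H$ be a flat of $M \div F$ with $F \nsubseteq H$. By parts (1) and (2), $H$ is a flat of $M$ with $F \nsubseteq \cl_M(H)$. Pick any $f \in F \setminus H$; since $f \in F$, the sets $H \cup F$ and $H \cup F \cup \{f\}$ agree, so the ``second argument'' of the minimum defining $r_{M \div F}(H \cup \{f\})$ coincides with that for $r_{M \div F}(H)$, namely $r_M(H \cup F) - (r_M(F)-1)$. The strict inequality $r_{M \div F}(H \cup \{f\}) > r_{M \div F}(H)$ demanded by $H$ being a flat of $M \div F$ therefore simultaneously forces the minimum at $H$ to be attained by $r_M(H)$ -- yielding $r_M(H) = r_{M \div F}(H)$ -- and forces $r_M(H \cup F) \ge r_M(H) + r_M(F)$; the submodularity upper bound then gives equality, so $H$ is skew to $F$ in $M$.

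For part (4), given a hyperplane $H$ of $M$ with $F \nsubseteq H$, set $G := \cl_M(H) \cap \cl_M(F)$; by submodularity $r_M(G) = r_M(F) - 1$. I will extend a basis $I_G$ of $G$ to a basis $I$ of $H$ and put $X := \cl_M(I \setminus I_G)$. Then $X$ is a flat of $M$ of rank $r(M) - r_M(F)$ contained in $H$, and since $I_G \subseteq \cl_M(F)$ one has $\cl_M(X \cup F) \supseteq \cl_M(I \cup F) = \cl_M(H \cup F)$, of rank $r(M)$. Hence $X$ is skew to $F$, so $X \cap F = \emptyset$ in a simple matroid. To finish I will check that $X$ is a flat of $M \div F$: skewness and the rank formula give $r_{M \div F}(X) = r_M(X) = r(M \div F) - 1$, and for any $e \notin X$ a quick calculation with the formula shows $r_{M \div F}(X \cup \{e\}) \ge r_M(X) + 1 > r_{M \div F}(X)$. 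Thus $X$ is a hyperplane of $M \div F$ disjoint from $F$.

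The main obstacle I expect is that last verification -- that my constructed flat $X$ of $M$ survives as a flat of $M \div F$, not merely as a rank-$(r(M \div F)-1)$ subset of $H$. Conceptually this is the converse direction of part (3): \emph{every} flat of $M$ skew to $F$ is itself a flat of $M \div F$. Once one recognizes that skewness to $F$ is the precise condition preserved by the complete principal truncation, everything else in the proposition reduces to routine bookkeeping with the uniform rank formula above.
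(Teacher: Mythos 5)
Your uniform rank formula $r_{N_k}(X) = \min\{r_M(X),\, r_M(X\cup F) - k\}$ is correct and gives a slightly cleaner packaging of the paper's case analysis; parts (1)--(3) go through, and your observation that flats of $M$ skew to $F$ are exactly the flats of $M \div F$ disjoint from $F$ is precisely the unstated lemma behind the paper's terse proof of (4). However, your execution of (4) contains a genuine error.

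The problem is the claim that, for $G := \cl_M(H) \cap \cl_M(F)$, submodularity gives $r_M(G) = r_M(F) - 1$. Submodularity only gives the upper bound $r_M(G) \le r_M(H) + r_M(F) - r_M(H\cup F) = r_M(F) - 1$; the reverse inequality is a modularity-type statement that fails in general matroids. Take $M = U_{4,6}$, $H = \{1,2,3\}$ (a hyperplane), and $F = \{4,5\}$ (so $r_M(F) = 2 \le r(M)-2$). Then $\cl_M(F) = \{4,5\}$ and $G = \emptyset$, so $r_M(G) = 0 < r_M(F) - 1 = 1$. Your construction then sets $I_G = \emptyset$, $I = \{1,2,3\}$, and $X = \cl_M(I) = H$, which has rank $3 \ne r(M) - r_M(F) = 2$, is not skew to $F$, and in fact spans $M\div F$ rather than being a hyperplane of it. So the construction fails.

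The fix is to build $X$ as a span of an independent set rather than by carving out $G$: take a basis $J$ of $F$; since $r_{M/J}(H) = r_M(H\cup F) - r_M(F) = r(M) - r_M(F)$, the flat $H$ contains an independent set $I$ of $M/J$ of that size, and then $X := \cl_M(I)$ has rank $r(M)-r_M(F)$, is skew to $F$ (because $I\cup J$ is independent in $M$), and lies in $H$ because $H$ is a flat of $M$ containing $I$. After that, your skew-flats-are-truncated-flats lemma finishes the argument exactly as you intended. This is essentially what the paper's one-line proof of (4) is asserting, but the existence of such an $X$ does require the contraction argument above rather than a direct appeal to submodularity.
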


\begin{proof}
For (1), suppose that $F \nsubseteq \cl_{N}(X)$. Then
\begin{align*}
    r_{M}(X \cup F) - 1 = r_{N}(X \cup F) \geq r_{N}(X) + 1 \geq r_{M}(X), 
\end{align*}
as desired. 

For (2), suppose to the contrary that there exists $e \in \cl_{M}(H) \del H$. If $F \subseteq \cl_{M}(H)$, then
\begin{align*}
    r_{N}(H \cup \{e\}) = r_{M}(H \cup \{e\}) - 1 = r_{M}(H)-1 = r_{N}(H),
\end{align*}
a contradiction. On the other hand, if $F \nsubseteq \cl_{M}(H)$, then
\begin{align*}
    r_{N}(H \cup \{e\}) - 1 = r_{N}(H) = r_{M}(H) = r_{M}(H \cup \{e\}),
\end{align*}
again a contradiction.

For (3), it suffices to prove that if $H$ is a flat in $N$ which is skew to $F$, then $H$ is a flat in $M$ which is skew to $F$. (The result then follows immediately by induction on $r_{M}(F)$.) Indeed, let $H$ be a flat in $N$ which is skew to $F$. By (2), $H$ is a flat in $M$. By (1), $F \nsubseteq \cl_{M}(H)$, and so $r_{M}(H) = r_{N}(H)$. Hence
\begin{align*}
    r_{M}(X \cup F) - 1 = r_{N}(X \cup F) = r_{N}(X) + r_{N}(F) = r_{M}(X) + r_{M}(F) - 1,
\end{align*}
and so $H$ and $F$ are skew in $M$.

For (4), take a flat $X \subseteq H$ in $M$ which is skew to $F$ and has rank $r(M)-r_{M}(F)$.
\end{proof}

The reason principal truncation is the right operation to use in our proof is that, subject to mild assumptions, it behaves very nicely with respect to an optimal stratification. The main lemma of this section will allow us to obtain an optimal stratification in our truncated matroid that preserves much of the relevant data from the original. We first prove a slightly weaker result. For an optimal $k$-stratification $(X_{2}, \dots, X_{k})$ of a matroid $M$ and $i \in \{2, \dots, k\}$, we call $(X_{i}, \dots, X_{k})$ an optimal {\it{partial}} $k$-stratification of $M$.

\begin{proposition}\label{truncation_prop}
For $k \geq 4$, let $M$ be a simple rank-$(k+1)$ matroid, and let $(X_{2}, \dots, X_{k})$ be an optimal $k$-stratification of $M$. For $i \in \{3, \dots, k-1\}$, if $X_{i}$ is a rank-$i$ flat in $M$, and $|X_{i-1}| < \frac{1}{k}|X_{i}|$, then $(X_{i}, \dots, X_{k})$ is an optimal partial $(k-1)$-stratification of the $1$-fold principal truncation of $X_{i}$ in $M$.
\end{proposition}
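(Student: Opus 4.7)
Write $N$ for the $1$-fold principal truncation of $X_i$ in $M$; note that $N$ has rank $k$ and is simple since $r_M(X_i) = i \geq 3$. The plan is to verify the two defining properties of an optimal partial $(k-1)$-stratification of $N$: for each $j \in \{i, \ldots, k\}$,
\begin{enumerate}
    \item[(a)] $X_j$ is $(j-1)$-degenerate in $N$, and
    \item[(b)] every $(j-1)$-degenerate subset $Y$ of $X_{j+1}$ in $N$ (with $X_{k+1} = E(M)$) satisfies $|Y| \leq |X_j|$.
\end{enumerate}

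Part (a) is immediate at $j = i$: since $X_i$ is a rank-$i$ flat in $M$, $r_N(X_i) = i - 1$, so $X_i$ is covered in $N$ by itself as a single flat of $N$-rank $i - 1 \geq 2$. For $j \geq i+1$, I would begin with a pairwise-skew cover $F_1, \ldots, F_t$ of $X_j$ witnessing $j$-degeneracy in $M$, so $\sum_m (r_M(F_m) - 1) \leq j - 1$. The crux of the argument is showing that some $F_m$ must contain $X_i$. Suppose otherwise: then for each $m$, $F_m \cap X_i$ is a proper subflat of $X_i$, so it is contained in a flat of $M$-rank at most $i - 1$ and is therefore $(i-1)$-degenerate. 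The optimality of $X_{i-1}$ in the stratification yields $|F_m \cap X_i| \leq |X_{i-1}|$. Since pairwise-skewness in the simple matroid $M$ makes the $F_m$ pairwise disjoint, summing over the cover of $X_i \subseteq X_j$ gives
\[|X_i| = \sum_m |F_m \cap X_i| \leq t\cdot|X_{i-1}| < \tfrac{t}{k}|X_i|,\]
which forces $t > k$. But $t \leq \sum_m (r_M(F_m) - 1) \leq j - 1 \leq k - 1$, a contradiction. Hence some $F_{m_0}$ contains $X_i$, and the closures $\cl_N(F_m)$ form a cover of $X_j$ in $N$ whose $N$-ranks equal $r_M(F_m)$ for $m \neq m_0$ and $r_M(F_{m_0}) - 1$ for $m_0$. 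The total sum therefore drops by exactly one to at most $j - 2$, and all ranks remain at least $2$ (using $i \geq 3$ for the flat containing $X_i$), giving the desired $(j-1)$-degeneracy of $X_j$ in $N$.

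For part (b), given $Y \subseteq X_{j+1}$ that is $(j-1)$-degenerate in $N$, I would take a witness cover (flats of $N$ of $N$-rank at least $2$, sum at most $j-2$, covering $Y$) and iteratively replace non-skew pairs by their $N$-span to obtain a pairwise-skew cover $\tilde H_1, \ldots, \tilde H_t$ with the same properties. Since $N$ is simple, pairwise-skewness forces pairwise disjointness, so at most one $\tilde H_m$ can contain the nonempty set $X_i$. By Proposition \ref{helpful}(2), each $\tilde H_m$ is a flat of $M$ with $r_M(\tilde H_m) = r_N(\tilde H_m) + \mathbf{1}[X_i \subseteq \tilde H_m]$, and therefore
\[\sum_m (r_M(\tilde H_m) - 1) \leq (j - 2) + 1 = j - 1,\]
so $Y$ is $j$-degenerate in $M$. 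The optimality of $X_j$ as the largest $j$-degenerate subset of $X_{j+1}$ in $M$ then gives $|Y| \leq |X_j|$.

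The main obstacle is the key claim in part (a) that some $F_m$ must contain $X_i$: this is the single place where the hypothesis $|X_{i-1}| < (1/k)|X_i|$ is invoked, and it requires carefully combining pairwise-disjointness of the optimal cover, the bound $|X_{i-1}|$ on proper $(i-1)$-degenerate subflats of $X_i$, and the bound $t \leq j - 1 \leq k - 1$ on the number of cover components. Once that claim is in hand, the rest is bookkeeping between ranks in $M$ and in $N$, exploiting the fact that $r_M(F) - r_N(F) \in \{0,1\}$ and equals $1$ precisely when $X_i \subseteq F$.
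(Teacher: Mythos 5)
Your proof is correct and follows essentially the same approach as the paper's. The only (cosmetic) difference is in part (a): you sum $|F_m \cap X_i|$ over a pairwise-disjoint cover to force $t > k$, whereas the paper applies a pigeonhole argument directly — the largest $|F_m \cap X_i|$ is at least $|X_i|/t \geq |X_i|/k > |X_{i-1}|$ — which avoids needing to invoke pairwise disjointness of the cover.
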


\begin{proof}
Let $N$ denote the $1$-fold principal truncation of $X_{i}$ in $M$. Clearly $X_{i}$ is a rank-$(i-1)$ flat in $N$, and so it is $(i-1)$-degenerate in $N$. Now let $\ell \in \{i+1, \dots, k\}$, and let $(F_{1}, \dots, F_{t})$ be a collection of flats of $M$, each having rank at least two, such that $X_{\ell} = F_{1} \cup \dots \cup F_{t}$ and $\sum_{j=1}^{t}(r_{M}(F_{j}) -1 ) \leq \ell-1$. Choose $j \in \{1, \dots, t\}$ such that $|X_{i} \cap F_{j}|$ is as large as possible. Since $t \leq k$ and $X_{i} \subseteq X_{\ell}$, $|X_{i} \cap F_{j}| \geq \frac{1}{k}|X_{i}|$. If $X_{i} \nsubseteq F_{j}$, then $X_{i} \cap F_{j}$ is an $(i-1)$-degenerate set, contrary to the choice of $X_{i-1}$. Thus $X_{i} \subseteq F_{j}$. It follows that $X_{\ell}$ is $(\ell - 1)$-degenerate in $N$.

It remains to show that, for each $\ell \in \{i, \dots, k\}$, $X_{\ell}$ is a largest $(\ell-1)$-degenerate subset of $X_{\ell+1}$ in $N$. (Here $X_{k+1} = E(N)$.) Indeed, for $\ell \in \{i, \dots, k\}$, suppose to the contrary that there exists $Y \subseteq X_{\ell+1}$ such that $|Y| > |X_{\ell}|$ and $Y$ is $(\ell-1)$-degenerate in $N$. Let $(F_{1}, \dots, F_{t})$ be a collection of flats of $N$, each with rank at least two, such that $Y = F_{1} \cup \dots \cup F_{t}$ and $\sum_{j=1}^{t} (r_{N}(F_{j})-1) \leq \ell - 2$. Recall that we may assume that the flats $F_{1}, \dots, F_{t}$ are pairwise-skew in $N$, and so at most one of them contains $X_{i}$. If $X_{i} \subseteq F_{j}$ for some $j \in \{1, \dots, t\}$, then, by Proposition \ref{helpful}(1), $Y$ is $\ell$-degenerate in $M$; otherwise $Y$ is $(\ell-1)$-degenerate in $M$, in either case a contradiction to the choice of $X_{\ell}$.
\end{proof}

Now we are ready to prove our main lemma on principal truncation.

\begin{lemma}\label{truncation_lemma}
For $k \geq 2$, let $M$ be a simple rank-$(k+1)$ matroid, and let $(X_{2}, \dots, X_{k})$ be an optimal $k$-stratification of $M$. For $i \in \{2, \dots, k-1\}$, if $X_{i}$ is a rank-$i$ flat in $M$, and $|X_{i-1}| +1 < \frac{1}{k}|X_{i}|$, then $(X_{i+1} \setminus X_{i}, \dots, X_{k} \setminus X_{i})$ is an optimal $(k+1-i)$-stratification of $(M \div X_{i}) \setminus X_{i}$.
\end{lemma}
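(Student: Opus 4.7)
The plan is to verify directly the two defining properties of an optimal $(k+1-i)$-stratification of $N':=(M\div X_i)\setminus X_i$ for the sequence $(X_{i+1}\setminus X_i,\dots,X_k\setminus X_i)$. Write $N=M\div X_i$. The key structural observation is that the complete principal truncation collapses $X_i$ to a single rank-$1$ parallel class in $N$: any flat of $N$ meeting $X_i$ must contain all of $X_i$, and consequently $N'$ is simple.

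For the degeneracy property, each $X_j\setminus X_i$ (for $j\in\{i+1,\dots,k\}$) must be shown $(j-i+1)$-degenerate in $N'$. The hypothesis implies $|X_{i-1}|<|X_i|/k$, so the pigeonhole argument from the proof of Proposition \ref{truncation_prop} produces a $j$-degenerate decomposition $X_j\subseteq F_1\cup\cdots\cup F_t$ into pairwise-skew flats of $M$ in which some flat, say $F_1$, contains $X_i$. Using the rank formula for the complete principal truncation, each $F_l$ with $l\geq 2$ (being skew to $X_i$) remains a flat of $N'$ of the same $M$-rank, while $F_1\setminus X_i$ is a flat of $N'$ of rank at most $r_M(F_1)-(i-1)$. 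Summing rank contributions gives a cover of $X_j\setminus X_i$ with rank-sum at most $(j-1)-(i-1)=j-i$, as required. Two edge cases need attention: when $F_1=X_i$ one drops $F_1$ from the cover, and when $F_1\setminus X_i$ is a single point $p$ (forcing $r_M(F_1)=i+1$), one absorbs $p$ into the rank-$(r_M(F_2)+1)$ flat $\cl_{N'}(F_2\cup\{p\})$, and the same rank-sum bound survives.

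For optimality, suppose toward contradiction there is $S\subseteq X_{j+1}\setminus X_i$ (with the convention $X_{k+1}=E(M)$) with $|S|>|X_j\setminus X_i|$ and $S$ is $(j-i+1)$-degenerate in $N'$, witnessed by pairwise-skew flats $G_1,\dots,G_s$ of $N'$ with $\sum_l(r_{N'}(G_l)-1)\leq j-i$. By Proposition \ref{helpful}(2), each $\cl_N(G_l)$ is a flat of $M$, and the lifts remain pairwise skew in $N$. The crucial step is to observe that at most one of the $\cl_N(G_l)$ can meet $X_i$: any flat of $N$ meeting $X_i$ must contain all of $X_i$, but two lifts both containing $X_i$ would have intersection of $N$-rank at least $1$, contradicting skewness. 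If some $\cl_N(G_{l_0})$ contains $X_i$, its $M$-rank equals $r_{N'}(G_{l_0})+i-1$ while the other lifts have $M$-rank equal to their $N'$-rank, giving total rank-sum $\leq(j-i)+(i-1)=j-1$; otherwise no lift meets $X_i$, and augmenting the cover by $X_i$ itself contributes an extra $i-1$ for the same total. Either way, $S\cup X_i\subseteq X_{j+1}$ is $j$-degenerate in $M$ with $|S\cup X_i|=|S|+|X_i|>|X_j|$, contradicting the optimality of $X_j$.

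The main obstacle is this "at most one lifted flat meets $X_i$" observation; it is exactly the payoff of using the complete principal truncation rather than a one-fold truncation, since only after fully collapsing $X_i$ to a parallel class does \emph{meeting} force \emph{containing}, at which point skewness of two lifts both containing $X_i$ yields a contradiction. The rest of the proof is bookkeeping with ranks, though the edge cases in the degeneracy step require some care.
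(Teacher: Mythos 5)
Your proof is correct, but it takes a genuinely different route from the paper's. The paper proves the case $i=2$ directly (where $M \div X_2$ is a single-step truncation) and then handles $i\geq 3$ by induction: it uses Proposition \ref{truncation_prop} to show that after a one-fold truncation, $(X_i,\dots,X_k)$ remains an optimal partial stratification, fills in the missing smaller strata with a largest $(i-2)$-degenerate subset $Z\subseteq X_i$ (here the extra ``$+1$'' in the hypothesis $|X_{i-1}|+1 < |X_i|/k$ guarantees the inductive hypothesis $|Z|+1 < |X_i|/(k-1)$ survives), and then applies the lemma inductively to the rank-$k$ truncation. You instead collapse $X_i$ all at once and verify the two defining properties of an optimal stratification of $(M\div X_i)\setminus X_i$ directly, relying on the observation that $X_i$ is a rank-$1$ flat of the complete truncation, so any flat of $M\div X_i$ meeting $X_i$ contains all of it; this is what makes the ``at most one lift contains $X_i$'' step work cleanly via skewness. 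Your degeneracy step reuses the pigeonhole argument from the proof of Proposition \ref{truncation_prop} (so the two proofs share that ingredient), but your optimality step is a direct generalization of what the paper does only for $i=2$. What the paper's inductive structure buys is that the awkward bookkeeping of the complete truncation is hidden behind Proposition \ref{truncation_prop}; what your direct argument buys is a single self-contained proof that never peels off one truncation at a time. Two small points worth tightening in your write-up: the phrase ``each $F_l$ remains a flat of $N'$'' holds (one can check a flat of $M$ skew to $X_i$ stays a flat of $M\div X_i$), but it is safer and sufficient to pass to $\cl_{N'}(F_l)$, which has the same rank; and in the edge case $F_1\setminus X_i=\{p\}$ with $t=1$, there is no $F_2$ to absorb $p$ into, but then $X_j\setminus X_i$ has rank at most $1$ and is trivially $(j-i+1)$-degenerate since $j-i+1\geq 2$.
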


\begin{proof}
We first prove the result for $i=2$, and then proceed by induction. Let $\ell \in \{3, \dots, k\}$, and let $(F_{1}, \dots, F_{t})$ be a collection of flats of $M$, each having rank at least two, such that $X_{\ell} = F_{1} \cup \dots \cup F_{t}$ and $\sum_{j=1}^{t} (r_{M}(F_{j}) - 1) \leq \ell -1$. If $X_{2} \nsubseteq F_{j}$ for all $j \in \{1, \dots, t\}$, then $X_{2} \leq t < k$, a contradiction. It follows that, for all $\ell \in \{3, \dots, k\}$, $X_{\ell}$ is an $(\ell-1)$-degenerate set in $M \div X_{2}$. Thus $(X_{3} \del X_{2}, \dots, X_{k} \del X_{2})$ is a $(k-1)$-stratification of $(M \div X_{2}) \del X_{2}$; it remains to show that it is optimal. Indeed, let $\ell \in \{3, \dots, k\}$, and suppose to the contrary that there exists $Y \subseteq X_{\ell+1} \del X_{2}$ such that $|Y| > |X_{\ell} \del X_{2}|$ and $Y$ is $(\ell-1)$-degenerate in $M \div X_{2}$. Then, by Proposition \ref{helpful}(1), $Y \cup X_{2}$ is $\ell$-degenerate in $M$, a contradiction. This completes the proof for $i = 2$.

Now suppose that $i \in \{3, \dots, k-1\}$, and let $N$ be the $1$-fold principal truncation of $X_{i}$ in $M$. Then $X_{i}$ is a rank-$(i-1)$ flat in $N$, and, by Proposition \ref{truncation_prop}, $(X_{i}, \dots, X_{k})$ is an optimal partial $(k-1)$-stratification of $N$. Let $Z$ be a largest $(i-2)$-degenerate set in $N$ contained in $X_{i}$. Since $X_{i}$ is a rank-$i$ flat in $M$ and $Z \subseteq X_{i}$, $Z$ is an $(i-2)$-degenerate set in $M$, and so $|X_{i}| > k(|Z|+1)$. By induction, $(X_{i+1} \del X_{i}, \dots, X_{k} \del X_{i})$ is an optimal $(k+1-i)$-stratification of $(N \div X_{i}) \del X_{i} = (M \div X_{i}) \del X_{i}$.
\end{proof}

\section{Proof of the main theorem}

Along with principal truncation, the other main tool we use in the proof is the following theorem of Do \cite{Do}, which allows us to assume the existence of a huge degenerate set in the matroid under consideration.

\begin{do_thm}\cite{Do}
For every $\epsilon >0$ and integer $k \geq 2$, there exists $\gamma_{k}(\epsilon) > 0$ such that, if $M$ is a real-representable matroid on $n$ points, then either the number of rank-$k$ flats in $M$ is at least $\gamma_{k}(\epsilon)n^{k}$, or $M$ has a $k$-degenerate set of size at least $(1-\epsilon)n$.
\end{do_thm}

To facilitate induction, we prove something slightly stronger and more technical than the Main Theorem. In order to state this result, we formulate a special notion of degeneracy for bicoloured matroids. For a sequence of sets $(F_{1}, \dots, F_{t})$ in a matroid $M$, we let $M \div F_{1} \div \dots \div F_{t}$ denote the matroid obtained by completely principally truncating $F_{1}$ in $M$, then completely principally truncating $F_{2}$ in $M \div F_{1}$, and so on. For $k \geq 2$ and $\ell \geq 0$, let $M$ be a simple rank-$(k+1)$ matroid, and let $(R,B)$ be a $2$-colouring of $M$ with $|R|=\ell$. We say that $(M,R,B)$ is $(k,\ell)$-{\it{degenerate}} if either $B$ is $k$-degenerate, or there exists a sequence $(F_{1}, \dots, F_{t})$ of disjoint subsets of $B$ such that
\begin{enumerate}
    \item[(D1)] $t \leq k-2$;
    \item[(D2)] $2 \leq r_{M}(F_{1}) \leq k-1$, and, for each $j \in \{2, \dots, t\}$, $2 \leq r_{M_{j}}(F_{j}) \leq r(M_{j}) -2$, where $M_{j} = M \div F_{1} \div \dots \div F_{j-1}$; and
    \item[(D3)] every hyperplane in $M \div F_{1} \div \dots \div F_{t}$ has non-empty intersection with $R \cup F_{1} \cup \dots \cup F_{t}$.
\end{enumerate}

Now we can state our technical version of the Main Theorem.

\begin{theorem}\label{tech_main}
For integers $k \geq 2$ and $\ell\geq 0$, there exist $n_{k}(\ell) \geq 0$ and $\rho_{k}(\ell) > 0$ such that the following holds. Let $M$ be a simple rank-$(k+1)$ real-representable matroid, let $(R,B)$ be a $2$-colouring of $M$ where $|R|=\ell$, and let $(X_{2}, \dots, X_{k})$ be an optimal $k$-stratification of $M|B$. If $|B| \geq n_{k}(\ell)$ and $(M,R,B)$ is not $(k,\ell)$-degenerate, then there are at least $\rho_{k}(\ell) |B| \prod_{i=2}^{k}(|B|-|X_{i}|)$ monochromatic-blue hyperplanes in $M$.
\end{theorem}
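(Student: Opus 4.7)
The plan is to prove \cref{tech_main} by induction on $k \geq 2$. For the base case $k=2$, the matroid $M$ has rank $3$ and its hyperplanes are lines. The constraint $t \leq k-2 = 0$ forces any $(2,\ell)$-degeneracy witness to be the empty sequence, so $(M,R,B)$ fails to be $(2,\ell)$-degenerate exactly when $r(M|B) = 3$ and at least one monochromatic-blue line of $M$ exists. Apply Beck's Theorem to the simple rank-$3$ real-representable matroid $M|B$ to obtain $\Omega(|B|(|B|-|X_2|))$ lines; each of the $\ell$ red points lies on at most $|B|/2$ such lines (a line of $M|B$ through a red point is determined by two of its blue points), contributing at most $\ell|B|/2$ lines with a red point. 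For $|B|-|X_2|$ above a sufficient constant in $\ell$, subtraction yields the required bound, and when $|B|-|X_2|$ is smaller, a pencil-of-lines argument through any blue point off the long line $L \supseteq X_2$ produces $|X_2|-\ell = \Omega(|B|)$ monochromatic-blue lines, matching the target since $|B|-|X_2|$ is then bounded.

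For the inductive step with $k \geq 3$, the plan is to reduce rank via principal truncation, relying on \cref{truncation_lemma} to carry the optimal stratification across. The strategy is to identify an index $i \in \{2,\ldots,k-1\}$ for which $X_i$ is a rank-$i$ flat of $M|B$ with $|X_{i-1}|+1 < |X_i|/k$, and then truncate. By \cref{truncation_lemma}, the matroid $(M \div X_i) \setminus X_i$ has rank $k+2-i$ with optimal $(k+1-i)$-stratification $(X_{i+1}\setminus X_i,\ldots,X_k\setminus X_i)$, so the inductive hypothesis applies, furnishing many monochromatic-blue hyperplanes. By \cref{helpful}(3), each such hyperplane is either a hyperplane of $M$ containing $X_i$ (what we want) or a rank-$(k-1)$ flat of $M$ skew to $X_i$; the latter contribution is controlled via \cref{aggregate}. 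When no such $X_i$ can be truncated, the stratification jumps are all small, and Lund's rank-$k$-flat bound applied to $M|B$ gives $\Omega(|B|\prod_{i=2}^k(|B|-|X_i|))$ rank-$k$ flats of $M|B$, from which we subtract the $O(\ell |B|^{k-1})$ flats whose $M$-closure contains a red element. Matching the extra factor $(|B|-|X_i|)$ not accounted for by the inductive call is carried out by summing over the truncation choices or via a pencil argument at the truncation level.

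The main obstacle is establishing that the truncated matroid $(M \div X_i) \setminus X_i$ inherits non-$(k-1,\ell)$-degeneracy from $(M,R,B)$, which is needed to apply the inductive hypothesis. We prove this contrapositively: any witnessing sequence $(F_1',\ldots,F_t')$ in the truncated matroid satisfying (D1)--(D3) lifts by prepending $X_i$ to a sequence $(X_i,F_1',\ldots,F_t')$ for $M$. \cref{helpful} ensures that each $F_j'$ remains a flat of $M$ at every intermediate truncation stage and that the hyperplane condition (D3) in the iterated truncation transfers to $M$ by canonical flat correspondence. Likewise, a covering of $E((M \div X_i)\setminus X_i)$ by $(k-1)$-degenerate flats lifts to a covering of $B$ by flats of $M$, with $X_i$ as one of the pieces, yielding a $k$-degeneracy witness for $B$. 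Both outcomes contradict the hypothesis, closing the induction. The final bookkeeping, reconstructing the full product $|B|\prod_{i=2}^{k}(|B|-|X_i|)$ from the inductive output $|B|\prod_{i \neq i_0}(|B|-|X_i|)$ together with the factor $(|B|-|X_{i_0}|)$ arising at the truncation step, is assembled by summing contributions across the stratification levels where $|B|-|X_i|$ is not trivially large.
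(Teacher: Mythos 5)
Your overall architecture---induction on $k$, Beck's Theorem for the base case $k=2$, principal truncation at a well-chosen index $i$ with Lemma~\ref{truncation_lemma} carrying the stratification, and a lifting argument to transfer non-degeneracy to the truncated matroid---matches the paper. The base case and the degeneracy-lifting argument are essentially right. But the heart of the inductive step, recovering the full product $|B|\prod_{j=2}^{k}(|B|-|X_{j}|)$ from the inductive output, is misdescribed in a way that leaves a genuine gap.

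After truncating at $X_{i}$, the truncated matroid $M'$ has $B' = B \setminus X_{i}$ with $|B'| = |B|-|X_{i}|$, and the inductive stratification $(X_{i+1}\setminus X_i,\dots,X_k\setminus X_i)$ yields the bound $\rho'\,\prod_{j=i}^{k}(|B|-|X_{j}|)$---not ``$|B|\prod_{j\neq i}(|B|-|X_j|)$'' as you write. You are therefore missing the factors $|B|\cdot(|B|-|X_2|)\cdots(|B|-|X_{i-1}|)\approx |B|^{\,i-1}$, not a single factor. Moreover, by Proposition~\ref{helpful}(3) the monochromatic-blue hyperplanes of $M'$ are rank-$(k+1-i)$ flats of $M$ skew to $X_{i}$, not rank-$(k-1)$ flats, and crucially they are \emph{not} hyperplanes of $M$ at all (the monochromatic-blue ones must avoid $x_i$, so the ``hyperplane of $M$ containing $X_i$'' branch of your dichotomy never fires for the flats you need). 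The missing $|B|^{\,i-1}$ factor has to be manufactured by pairing each such rank-$(k+1-i)$ flat $H$ with the $\Omega(|B|^{\,i-1})$ hyperplanes of $M|X_{i}$ (supplied by Do's Theorem via the fact that $X_{i}$ is a rank-$i$ flat, which requires the specific threshold choice of $i$ you only gesture at), and then controlling overcounting via Theorem~\ref{aggregate} plus two Markov-inequality truncations of the relevant collections. Your proposal replaces all of this with ``summing contributions across stratification levels'' and an appeal to Lund's theorem in a fallback case, neither of which supplies the multiplicative combination or the overcount control, so the bound does not follow as described. You also omit the preliminary case $|X_k|<|B|/2$ (handled directly with Do's Theorem) and the separate treatment of $i=k$, both of which are needed to make the ``choose $i$'' step well-defined.
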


Before proving Theorem \ref{tech_main}, we show that it indeed implies the Main Theorem. To prove the second part of the Main Theorem, we require the following lemma.

\begin{lemma}\label{gmep_structure}
For $k \geq 2$, let $M$ be a simple rank-$(k+1)$ real-representable matroid, and let $(R,B)$ be a $2$-colouring of $M$. If every hyperplane of $M$ contains a red element, then there exists $j \in \{0,1, \dots, k-1\}$ such that $M|B$ has a $(j+1)$-degenerate restriction with at least $|B| - b_{k-j}(|R|)$ elements. 
\end{lemma}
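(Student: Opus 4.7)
I would prove the lemma by induction on $k \geq 2$. The base case $k = 2$ is direct: the matroid $M$ has rank $3$ and every line (= hyperplane) contains a red point. If $r_M(B) \leq 2$, then $M|B$ is itself $2$-degenerate and $j = 1$ works (noting $b_1(|R|) = 0$); if $r_M(B) = 3$, then $B$ is non-collinear, so $(R, B)$ automatically satisfies (C1), (C2), and (C3), and the definition of $b_2(|R|)$ gives $|B| \leq b_2(|R|)$, so $j = 0$ works with the empty restriction.

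For the inductive step assume the result holds for $k - 1$ with $k \geq 3$. The case $r_M(B) \leq 1$ is trivial ($j = 1$), and if (C3) happens to hold for $(R,B)$, then by definition $|B| \leq b_k(|R|)$ and we take $j = 0$. The remaining case is when (C3) fails: there is a monochromatic-blue line $L$ of $M$ such that every rank-$(k-1)$ flat of $M$ skew to $L$ contains a red point. By Lemma \ref{gmep_helpful}(2), every hyperplane of the principal truncation $M \div L$ contains a red point; since $L$ collapses to a single parallel class in $M \div L$, the same holds for every hyperplane of the simple rank-$k$ matroid $N := (M \div L) \setminus L$, whose hyperplanes are exactly the rank-$(k-1)$ flats of $M$ skew to $L$. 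I would apply the inductive hypothesis to $N$ with colouring $(R, B \setminus L)$ to obtain $j' \in \{0, \dots, k - 2\}$ and a $(j' + 1)$-degenerate subset $X \subseteq B \setminus L$ with $|X| \geq |B| - |L| - b_{k - 1 - j'}(|R|)$.

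It remains to lift $X$ back to $M$: concretely, I would show that $Y := X \cup L$ is a $(j' + 2)$-degenerate subset of $B$ in $M$, so that setting $j = j' + 1 \in \{1, \dots, k - 1\}$ yields a $(j+1)$-degenerate restriction of $M|B$ of size $|Y| \geq |B| - b_{k - j}(|R|)$, completing the induction. This lifting is the main obstacle. Starting from a pairwise-skew cover $(F_1, \dots, F_t)$ of $X$ by flats of $N$ of rank at least two with $\sum_{i=1}^t (r_N(F_i) - 1) \leq j'$, Proposition \ref{helpful}(2) ensures that each $F_i$ is still a flat of $M$, and a direct rank computation gives $r_M(F_i) = r_N(F_i) + 1$ precisely when $L \subseteq \cl_M(F_i)$ and $r_M(F_i) = r_N(F_i)$ otherwise. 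A short submodularity argument, combined with the pairwise skewness of the $F_i$ in $N$, shows that $L \subseteq \cl_M(F_i)$ can hold for at most one index $i$. Thus either exactly one $F_i$ already absorbs $L$ in its $M$-closure, and $(F_1, \dots, F_t)$ alone covers $Y$ in $M$ with $\sum (r_M(F_i) - 1) \leq j' + 1$; or no $F_i$ does, in which case appending $L$ (a flat of rank two) to the cover yields the same bound. In both subcases $Y$ is $(j' + 2)$-degenerate in $M$, as required.
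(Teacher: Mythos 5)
Your proof follows the same inductive strategy as the paper's: split on whether $(R,B)$ satisfies (C3); if not, principally truncate a witnessing line $L$, apply the inductive hypothesis to the resulting rank-$k$ matroid, and lift the degenerate cover back up. The main value your write-up adds is that it actually proves the lifting claim (that $X \cup L$ is $(j'+2)$-degenerate in $M$), which the paper simply asserts without justification; your submodularity argument that at most one $F_i$ can have $L \subseteq \cl_M(F_i)$ is exactly the missing ingredient.

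Two technical points need repair, though both are fixable within your framework. First, the negation of (C3) gives a line $L$ of $M|B$, not a monochromatic-blue line of $M$: the closure $\cl_M(L)$ may contain red points. If $|\cl_M(L)\setminus L| \geq 2$, then $(M\div L)\setminus L$ is \emph{not} simple (the surviving red points of $\cl_M(L)$ form a nontrivial parallel class), so the inductive hypothesis cannot be invoked. The paper handles this by passing to a \emph{simplification} of $M \div L$ and colouring the one surviving representative of $\cl_M(L)$ appropriately; you should do the same (or delete all but one element of $\cl_M(L)$). Second, the sets $F_i$ are flats of $N=(M\div L)\setminus L$, not of $M\div L$, so Proposition~\ref{helpful}(2) does not directly yield that $F_i$ is a flat of $M$ — in fact, the one index $i_0$ with $L\subseteq\cl_M(F_{i_0})$ cannot have $F_{i_0}$ a flat of $M$, since $F_{i_0}$ is disjoint from $L$. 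Replacing each $F_i$ by $\cl_M(F_i)$ (or $\cl_{M\div L}(F_i)$) repairs this, and your rank computation and the skewness/submodularity argument carry over verbatim. Finally, the parenthetical claim that the hyperplanes of $N$ are ``exactly the rank-$(k-1)$ flats of $M$ skew to $L$'' is false — they also include $H\setminus L$ for hyperplanes $H$ of $M$ containing $L$ — but you never use this, so it is harmless.
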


\begin{proof}
We proceed by induction on $k \geq 2$. For $k=2$, the result follows with $j=1$ if $B$ is collinear; otherwise, the result follows with $j=0$. Now let $k \geq 3$. Suppose first that for every line $L$ of $M|B$, there is a rank-$(k-1)$ flat $F$ skew to $L$ that contains no red points. In this case, we have $|B| \leq b_{k}(|R|)$ by definition, and so the result follows with $j=0$. Thus we may assume that there is a line $L$ of $M|B$ such that every rank-$(k-1)$ flat $F$ skew to $L$ contains a red point. Let $N$ be a simplification of the principal truncation of $L$ in $M$ such that the element corresponding to $L$ is coloured red if and only if $L \cap R \neq \emptyset$. Let $(R', B')$ be the resulting $2$-colouring of $N$. By Lemma \ref{gmep_helpful}, every hyperplane of $N$ contains a red element. By induction, there exists $j \in \{0, 1, \dots, k-2\}$ such that $X \subseteq B'$ such that $N|X$ is $(j+1)$-degenerate and $|X| \geq |B'| - b_{k-1-j}(|R'|)$. Now $X \cup L$ is $(j+2)$-degenerate in $M$. The result now follows because $|B'| + |L \del X| \geq |B|$.
\end{proof}

We apply Lemma \ref{gmep_structure} to a matroid obtained from $M$ by a series of principal truncations, where we colour the truncated points red. The truncated points together with the blue degenerate set will then form the desired degenerate set in $M$. In order to prove this, we need the following rank formula, whose easy inductive proof is omitted.

\begin{proposition}\label{formula}
For $k \geq 2$, let $M$ be a rank-$(k+1)$ matroid, and let $(F_{1}, \dots, F_{t})$ be a sequence of subsets of $M$ satisfying (D2). Let $M_{1} = M$, and let $M_{j} = M \div F_{1} \div \dots \div F_{j-1}$ for each $j \in \{2, \dots, t+1\}$. For any flat $H$ of $M_{t+1}$,
\begin{align*}
    r_{M}(H) = r_{M_{t+1}}(H) + \sum_{i \in \cI} (r_{M_{i}}(F_{i}) - 1),
\end{align*}
where $\cI = \{ i \in \{1, \dots, t\} : F_{i} \subseteq H \}$. 
\end{proposition}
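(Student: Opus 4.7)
The plan is to proceed by induction on $t$. The base case $t=0$ is trivial: $M_{t+1} = M$, $\cI = \emptyset$, and both sides equal $r_M(H)$.

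For the inductive step, fix a flat $H$ of $M_{t+1} = M_t \div F_t$. By iterating Proposition \ref{helpful}(2), $H$ is also a flat of $M_t$. I split into cases depending on whether $F_t \subseteq H$. If $F_t \not\subseteq H$, then $t \notin \cI$, and Proposition \ref{helpful}(3) applied to $M_t$ and $F_t$ yields $r_{M_t}(H) = r_{M_{t+1}}(H)$. Applying the inductive hypothesis to $(F_1, \ldots, F_{t-1})$, viewing $H$ as a flat of $M_t = M_{(t-1)+1}$, gives the formula directly, since $\cI$ coincides with $\{i \in \{1, \ldots, t-1\} : F_i \subseteq H\}$. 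If instead $F_t \subseteq H$, then the inductive hypothesis gives $r_M(H) = r_{M_t}(H) + \sum_{i \in \cI \setminus \{t\}}(r_{M_i}(F_i) - 1)$, and I substitute the rank-drop identity $r_{M_t}(H) = r_{M_{t+1}}(H) + (r_{M_t}(F_t) - 1)$ to complete the step.

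The only substantive point is the rank-drop identity: if $F \subseteq X$ in a matroid $N_0$, then $r_{N_0 \div F}(X) = r_{N_0}(X) - (r_{N_0}(F) - 1)$. This follows by iterating the definition of one-fold principal truncation $r_{N_0}(F) - 1$ times: since $F \subseteq X$, we trivially have $F \subseteq \cl_{N}(X)$ in every intermediate matroid $N$ obtained during the truncation process, so each one-fold truncation subtracts exactly one from the rank of $X$. I do not anticipate any real obstacle; the argument is essentially careful bookkeeping enabled by Proposition \ref{helpful}, which is why the authors deem the proof routine and omit it.
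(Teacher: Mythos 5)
The paper explicitly declines to give a proof (``whose easy inductive proof is omitted''), so there is nothing to compare against; your argument is correct and is surely the intended one. The induction on $t$, the case split on whether $F_t \subseteq H$ handled via Proposition~\ref{helpful}(2) and (3), and the rank-drop identity $r_{N_0 \div F}(X) = r_{N_0}(X) - (r_{N_0}(F)-1)$ when $F \subseteq X$ (which holds because $F\subseteq X$ guarantees $F\subseteq\cl_N(X)$ at every intermediate stage of the truncation) together give a complete and clean proof.
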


Now let us derive the Main Theorem from Theorem \ref{tech_main}.

\begin{proof}[Proof of the Main Theorem.]
Let $C_{k} = \max \{ 2^{k(k-1)}/\rho_{k}(0) + k, n_{k}(0) \}$, where $n_{k}$ and $\rho_{k}$ are as in Theorem \ref{tech_main}. We may assume that $M$ is simple and not $k$-degenerate, and that $|E(M)| \geq n_{k}(0)$. Now let $R = \emptyset$ and $B = E(M)$. By Theorems \ref{aggregate} and \ref{tech_main}, we may assume that $(M,R,B)$ is not $(k,0)$-degenerate, for otherwise the average hyperplane-size is at most $C_{k}$. So there is a sequence $(F_{1}, \dots, F_{t})$ of disjoint subsets of $E(M)$ satisfying (D1), (D2) and (D3). For each $i \in \{1, \dots, t\}$, take $S_{i} \subseteq F_{i}$ with $r_{M}(S_{i})=2$ and define $L_{i} = \cl_{M}(S_{i})$. Let $M_{t+1} = M \div F_{1} \div \dots \div F_{t}$. Then every hyperplane of $M_{t+1}$ contains $F_{i}$ for some $i \in \{1, \dots, t\}$. By Proposition \ref{helpful}(4), every hyperplane of $M$ contains one of the sets $F_{1}, \dots, F_{t}$. This proves that either (1) or (2) holds. 

Let $\cP = \{ \cl_{M_{t+1}}(F_{i}) : i \in \{1, \dots, t\} \}$ be the collection of truncated points in $M_{t+1}$. (Note that, for $i \neq j$, it is possible that $\cl_{M_{t+1}}(F_{i}) = \cl_{M_{t+1}}(F_{j})$.) Let $N$ be a simplification of $M_{t+1}$, and let $R$ be the set of elements of $N$ contained in a point in $\cP$. Let $B = E(N) \del R$. (Note also that $B = E(M) \del R$.) Then $(R, B)$ is a $2$-colouring of $N$ such that every hyperplane contains a red element. By Lemma \ref{gmep_structure}, there exists $h \in \{0, 1, \dots, r(N)-2\}$ such that $N|B$ has an $(h+1)$-degenerate set $X$ such that $|X| \geq |B| - b_{r(N)-1-h}(|R|)$. Let $(G_{1}, \dots, G_{s})$ be a collection of pairwise-skew flats of $N|B$, each having rank at least two, such that $X \subseteq G_{1} \cup \dots \cup G_{s}$ and $\sum_{i=1}^{s}(r_{N}(G_{i})-1) \leq h$. Now let $j = r(M)+h-r(N)$, and observe that $b_{k-j}(|R|) \leq b_{k-j}(j)$, since $|R| \leq r(M) - r(N) \leq j$. Therefore the set $X \cup (\bigcup_{P \in \cP} P)$ has size at least $|E(M)| - b_{k-j}(j)$. It remains to show that this set is $(j+1)$-degenerate in $M$. 

Because the sets $G_{1}, \dots, G_{s}$ are pairwise-skew in $M_{t+1}$, any point of $\cP$ is spanned by at most one of them in $M_{t+1}$. Let $\cP'$ be the collection of points in $\cP$ not spanned by any of $G_{1}, \dots, G_{s}$ in $M_{t+1}$. Now by Proposition \ref{formula},
\begin{align*}
    \sum_{P \in \cP'} (r_{M}(P) - 1) + \sum_{i=1}^{s}(r_{M}(\cl_{M_{t+1}}(G_{i}))-1)
    = \sum_{i=1}^{t} (r_{M_{i}}(F_{i}) - 1) + \sum_{j=1}^{s}(r_{N}(G_{j}) - 1) \leq j,
\end{align*}
as required.
\end{proof}

Now we finish the proof.

\begin{proof}[Proof of Theorem \ref{tech_main}.]
We proceed by induction on $k \geq 2$. For the base case, assume that $|B| \geq \max \{ 4\ell, \ell \gamma_{2}(\frac{1}{4})^{-1} \}$. Suppose first that $|X_{2}| < \frac{3}{4}|B|$. In this case, there are at least $\gamma_{2}(\frac{1}{4})|B|^{2}$ lines in $M|B$. Since each red element is spanned by at most $\frac{1}{2}|B|$ lines of $M|B$, there are at most $\frac{1}{2}\ell|B|$ lines of $M|B$ which span a red element in $M$. Hence the number of monochromatic-blue lines in $M$ is at least $(\gamma_{2}(\frac{1}{4}) - \frac{\ell}{2|B|})|B|^{2} \geq \frac{1}{2}\gamma_{2}(\frac{1}{4})|B|^{2}$. Thus we may assume that $|X_{2}| \geq \frac{3}{4}|B|$. For each $e \in B \del X_{2}$, there are at least $|X_{2}| - (|B|-|X_{2}|+ \ell)$ (monochromatic-blue) lines in $M$ which contain only $e$ and an element of $X_{2}$. Hence the number of monochromatic-blue lines in $M$ is at least 
\begin{align*}
    (2|X_{2}| - |B| - \ell)(|B| - |X_{2}|) &\geq (\frac{1}{2}|B| - \ell)(|B|-|X_{2}|) \\
    &\geq \frac{1}{4}|B|(|B|-|X_{2}|),
\end{align*}
as required.

Now let $k \geq 3$ and $\ell \geq 0$. Let $\cH^{b}$ be the collection of monochromatic-blue hyperplanes of $M$. Suppose first that $|X_{k}| < \frac{1}{2}|B|$. In this case, $M|B$ has at least $\gamma_{k}(\frac{1}{2})|B|^{k}$ hyperplanes by Do's Theorem. Since each element of $R$ is spanned by at most $|B|^{k-1}$ hyperplanes of $M|B$, we have that
\begin{align*}
    |\mathcal{H}^{b}| \geq \Big( \gamma_{k}(1/2) - \frac{\ell}{|B|} \Big) |B|^{k} \geq \frac{1}{2}\gamma_{k}(1/2)|B|^{k},
\end{align*}
provided that $|B|$ is sufficiently large as a function of $k$ and $\ell$. 

Now assume that $|X_{k}| \geq \frac{1}{2}|B|$. Choose $i \in \{2, \dots, k\}$ as small as possible such that $|X_{i}| \geq |B|(2k)^{-(1+2(k-i))}$ and $|X_{i-1}| < |B|(2k)^{-(1+2(k-i+1))}$. (Recall that $X_{1}$ denotes the empty set.)

\begin{claim}\label{iflat}
$X_{i}$ is a rank-$i$ flat in $M|B$.
\end{claim}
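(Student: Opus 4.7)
The plan is to prove the Claim in two stages: first, that $r_{M|B}(X_i) = i$ and $X_i$ is contained in a unique rank-$i$ flat $F^* = \cl_M(X_i)$ of $M$; second, that $F^* \cap B = X_i$, so $X_i$ is closed in $M|B$. The selection of $i$ will be used repeatedly through the inequality $|X_i|/|X_{i-1}| > (2k)^2 > k-1$.

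For the first stage, I would take any $i$-degenerate cover of $X_i$ by pairwise-skew flats $F_1, \dots, F_t$ of $M$, each of rank at least $2$, with $\sum_j (r_M(F_j)-1) \leq i-1$, so that $t \leq i-1 \leq k-1$. Pigeonhole yields some $F^*$ with $|F^* \cap X_i| \geq |X_i|/(k-1) > |X_{i-1}|$. If $r_M(F^*) \leq i-1$, then $F^* \cap X_i$ is an $(i-1)$-degenerate subset of $X_i$ strictly larger than $X_{i-1}$, contradicting the optimality of $X_{i-1}$. Hence $r_M(F^*) = i$, which saturates the rank budget and forces $t=1$, so $X_i \subseteq F^*$. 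The same size gap rules out $r_M(X_i) \leq i-1$ (else $X_i$ is itself $(i-1)$-degenerate, contradicting $|X_i| > |X_{i-1}|$), so $r_M(X_i) = i$ and $F^* = \cl_M(X_i)$.

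For the second stage, suppose toward contradiction that there is some $e \in (F^* \cap B) \setminus X_i$. The plan is to show by induction on $\ell \in \{i+1, \dots, k\}$ that $e \notin X_\ell$, and then to derive a final contradiction from $X_k \cup \{e\}$ being $k$-degenerate. At each step I take an $(\ell-1)$-degenerate cover of $X_{\ell-1}$ by pairwise-skew flats and, by pigeonhole, locate a cover flat $H^*$ with $|H^* \cap X_i| \geq |X_i|/(k-1)$; the same optimality argument as before forces $r_M(H^* \cap X_i) = i$, so $F^* = \cl_M(H^* \cap X_i) \subseteq \cl_M(H^*)$ and in particular $e \in \cl_M(H^*)$. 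Replacing $H^*$ by $H^* \cup \{e\}$ (which has the same rank in $M$ and is a flat of the appropriate restriction, since its $M$-closure already contains $e$) extends the cover to show that $X_{\ell-1} \cup \{e\}$ is $(\ell-1)$-degenerate; if $e$ were in $X_\ell$, this would contradict the optimality of $X_{\ell-1}$ as the largest $(\ell-1)$-degenerate subset of $X_\ell$. Running the same argument on a $k$-degenerate cover of $X_k$ then shows that $X_k \cup \{e\} \subseteq B$ is $k$-degenerate, contradicting the optimality of $X_k$ as the largest $k$-degenerate subset of $B$.

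The main obstacle is propagating the bound cleanly through every stratification level: one needs the pigeonhole lower bound $|X_i|/(k-1)$ to remain larger than $|X_{i-1}|$ uniformly, and at each level one must verify that the local modification of the cover (inserting $e$ into a suitable cover flat) yields a legitimate degenerate cover of the enlarged set inside the restricted matroid, so that optimality can be invoked.
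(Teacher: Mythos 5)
Your two-stage plan is correct and, notably, more complete than what the paper actually writes. Stage~1 (forcing $r_M(X_i)=i$ by pigeonhole on a degenerate cover of $X_i$, using $|X_i| > (2k)^2\,|X_{i-1}|$ to contradict the optimality of $X_{i-1}$) is essentially the paper's own argument: the paper asserts that under ``suppose otherwise'' the cover must have $t\geq 2$ flats, locates by pigeonhole a cover flat of rank at most $i-1$ and size exceeding $|X_{i-1}|$, and stops there. However, ``$t\geq 2$'' only rules out $r_M(X_i)>i$; a $t=1$ cover (namely $X_i$ itself as a flat of $M|X_i$) exists whenever $r_M(X_i)\leq i$, whether or not $X_i$ is closed in $M|B$, so the paper's written proof establishes the rank but leaves the closure implicit. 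Your Stage~2 --- cascading the same pigeonhole-plus-optimality argument through $X_{i+1},\dots,X_k$ and finally $B$, to show that no $e\in(\cl_M(X_i)\cap B)\setminus X_i$ can exist --- correctly supplies that missing piece. The two key facts you rely on are both sound: a cover flat $H^*$ meeting $X_i$ in more than $|X_{i-1}|$ elements must have $\cl_M(H^*)\supseteq\cl_M(X_i)$, and replacing $H^*$ by $H^*\cup\{e\}$ keeps the same rank and yields a legitimate $(\ell-1)$-degenerate cover of $X_{\ell-1}\cup\{e\}$ in the restricted matroid. Only cosmetic remarks: formally the cover flats are flats of $M|X_\ell$ (so are subsets of $X_\ell$, making $H^*\cap X_i$ automatic), and the uniform gap $|X_i|/(k-1)>|X_{i-1}|$ you need at every level is indeed supplied by the choice of $i$ in the surrounding proof. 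So the proposal is correct, and its second stage furnishes closure-of-$X_i$ detail that the paper's much terser proof does not spell out.
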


\begin{proof}[Proof of claim.]
Suppose otherwise. Since $X_{i}$ is $i$-degenerate, there is a collection $(F_{1}, \dots, F_{t})$, where $t \geq 2$, of pairwise-skew flats in $M$, each with rank at least two, such that $X_{i} = F_{1} \cup \dots \cup F_{t}$, and $\sum_{j=1}^{t} (r_{M}(F_{j})-1) \leq i-1$. Then there exists $j \in \{1, \dots, t\}$ such that $|F_{j}| \geq \frac{|X_{i}|}{t} \geq \frac{|X_{i}|}{k} > |X_{i-1}|$. But, since $t \geq 2$, $r_{M}(F_{j}) \leq i-1$, that is, $F_{j}$ is an $(i-1)$-degenerate set, contrary to the choice of $X_{i-1}$.
\end{proof}

By Claim \ref{iflat} and Do's Theorem, we have that
\begin{equation}\label{Xi_hyp}
    |\cF_{i-1}(M|X_{i})| \geq \gamma_{i-1}(1-(2k)^{-2})|X_{i}|^{i-1} \geq C_{1}|B|^{i-1}
\end{equation}
for some constant $C_{1} > 0$ depending only on $k$. 

\begin{claim}\label{hyp}
If $i=k$, then $|\cH^{b}| \geq \frac{C_{1}^{2}}{128}|B|^{k-1}(|B|-|X_{k}|)$.
\end{claim}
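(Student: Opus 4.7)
The plan is to count, for each blue point $e \in B \setminus X_k$, the number of monochromatic-blue hyperplanes of $M$ through $e$ that meet $X_k$ in a rank-$(k-1)$ subflat, and then combine these counts via Cauchy--Schwarz.

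First, I would observe that since $X_k$ is a rank-$k$ flat of $M|B$, one has $\cl_M(X_k) \cap B = X_k$. Consequently, for every $F \in \cF_{k-1}(M|X_k)$ and every $e \in B \setminus X_k$, the point $e$ lies outside $\cl_M(F) \subseteq \cl_M(X_k)$, and so $H_{F,e} := \cl_M(F \cup \{e\})$ is a hyperplane of $M$ with $r_{M|X_k}(H_{F,e} \cap X_k) = k-1$. For fixed $e$, distinct $F$'s yield distinct $H_{F,e}$ (since $F = \cl_{M|X_k}(H_{F,e} \cap X_k)$), so by (\ref{Xi_hyp}) there are at least $C_1|B|^{k-1}$ such hyperplanes through each $e$. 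To pass to the \emph{blue} ones, I would subtract those containing a red point: for each $r \in R$, the hyperplanes of $M$ containing $\{e,r\}$ are in bijection with the rank-$(k-2)$ flats of $M/\cl_M(\{e,r\})$, of which there are at most $\binom{|B|+\ell}{k-2}$. Summing over $r \in R$ yields at most $\ell\binom{|B|+\ell}{k-2} \leq (C_1/2)|B|^{k-1}$ once $|B|$ is large enough (a bound we absorb into $n_k(\ell)$). Writing $b_e$ for the resulting count of blue hyperplanes through $e$, we obtain $b_e \geq (C_1/2)|B|^{k-1}$, and therefore $\sum_{e \in B \setminus X_k} b_e \geq (C_1/2)|B|^{k-1}(|B|-|X_k|)$.

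For the Cauchy--Schwarz step, set $a_H := |H \cap (B \setminus X_k)|$ for each blue hyperplane $H$ counted above; then $\sum_H a_H = \sum_e b_e$ and $|\cH^b| \geq (\sum_H a_H)^2 / \sum_H a_H^2$. I would bound the second moment by splitting pairs $(e_1, e_2) \in (B \setminus X_k)^2$ into diagonal and off-diagonal contributions. The diagonal contributes $\sum_e b_e \leq (|B|-|X_k|)|\cF_{k-1}(M|X_k)| \leq (|B|-|X_k|)\binom{|X_k|}{k-1} = O_k((|B|-|X_k|)|B|^{k-1})$. The off-diagonal contributes at most $(|B|-|X_k|)^2\binom{|B|+\ell}{k-2} = O_k((|B|-|X_k|)|B|^{k-1})$ since $|B|-|X_k| \leq |B|$. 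Combining these bounds yields $|\cH^b| = \Omega_k(C_1^2|B|^{k-1}(|B|-|X_k|))$, and the explicit constant $C_1^2/128$ follows from careful bookkeeping.

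The hardest step will be controlling the constant in the second-moment estimate tightly enough to recover the precise ratio $1/128$: both the diagonal term (bounded via $|\cF_{k-1}(M|X_k)| \leq \binom{|X_k|}{k-1}$) and the off-diagonal term (bounded via the trivial count of hyperplanes through a line) must be shown to have combined leading order at most $(C_1^2/128)^{-1}$ times the square of the lower bound on $\sum_e b_e$, which requires tracking the factorial factors from the binomial coefficients throughout.
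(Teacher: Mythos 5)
Your Cauchy--Schwarz argument is correct and proves the claim, but it is a genuinely different route from the paper's. The paper avoids second moments entirely: after establishing $|\cF_e| \geq (C_1/2)|B|^{k-1}$, it proves a \emph{first}-moment bound $\sum_{F \in \cF_e}|F \setminus X_k| \leq 2|B|^{k-1}$, applies Markov's inequality to pass to the subcollection $\cF_e'$ of hyperplanes $F$ with $|F \setminus X_k| \leq 8/C_1$, and then double-counts the pairs $(e, F)$ with $F \in \cF_e'$, using that each such $F$ can appear for at most $8/C_1$ choices of $e$. This produces the constant $C_1^2/128$ directly with no binomial bookkeeping. Your approach bounds $\sum_H a_H^2$ by splitting into diagonal and off-diagonal pairs; this works, and both ingredients you cite (the flat count $|\cF_{k-1}(M|X_k)| \leq \binom{|X_k|}{k-1}$ and the count of hyperplanes through a line) are sound. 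The resulting constant, however, is not $C_1^2/128$: your diagonal and off-diagonal terms give $\sum_H a_H^2 \lesssim \bigl(\tfrac{1}{(k-1)!}+\tfrac{1}{(k-2)!}\bigr)|B|^{k-1}(|B|-|X_k|)$, yielding roughly $\tfrac{C_1^2(k-1)!}{4k}|B|^{k-1}(|B|-|X_k|)$. Since $(k-1)!/(4k) \geq 1/6 > 1/128$ for every $k \geq 3$ (which is the relevant range here), your bound is in fact stronger than the stated one, so the claim follows --- but the final sentence about ``careful bookkeeping to recover the precise ratio $1/128$'' is a bit of a red herring: that specific constant is an artefact of the paper's Markov-plus-double-count route and is not what your second-moment estimate produces. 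What you actually need to say is that your constant is at least $1/128$, which it is. One could argue the paper's method is slightly cleaner since it sidesteps the off-diagonal contribution and the binomial coefficients altogether, but your version is the more standard incidence-geometry argument and generalizes more readily.
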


\begin{proof}[Proof of claim.]
Let $e \in B \setminus X_{k}$, and let $\mathcal{F}_{e}$ denote the collection of hyperplanes in $\mathcal{H}^{b}$ spanned by $e$ and a hyperplane of $M|X_{k}$. Since each element of $R$ is spanned by at most $|B|^{k-2}$ hyperplanes of $M|X_{k}$ in $M / e$, we have 
\begin{equation}\label{f_e}
    |\mathcal{F}_{e}| \geq C_{1} |B|^{k-1} - \ell |B|^{k-2} \geq \frac{C_{1}}{2}|B|^{k-1},
\end{equation}
provided that $|B|$ is sufficiently large as a function of $k$ and $\ell$. Similarly, since each element of $B \setminus X_{k}$ is spanned by at most $|B|^{k-2}$ hyperplanes of $M|X_{k}$ in $M / e$, we have that
\begin{equation}\label{out_agg}
    \sum_{F \in \mathcal{F}_{e}} |F \setminus X_{k}| \leq |B|^{k-2}(|B| - |X_{k}|) + |B|^{k-1} \leq 2|B|^{k-1}.
\end{equation}
Let $\mathcal{F}_{e}'$ be the collection of flats $F \in \mathcal{F}_{e}$ such that $|F \del X_{k}| \leq \frac{8}{C_{1}}$. Combining (\ref{f_e}) and (\ref{out_agg}) and applying Markov's Inequality, we have that $|\mathcal{F}_{e}'| \geq \frac{C_{1}}{4}|B|^{k-1}$. Hence
\begin{align*}
    \frac{C_{1}}{4} |B|^{k-1}(|B|-|X_{k}|) \leq \sum_{e \in E(M) \del X_{k}} \sum_{F \in \cF_{e}'} 1 \leq \frac{8}{C_{1}} |\mathcal{H}^{b}|,
\end{align*}
as desired.
\end{proof}

By Claim \ref{hyp}, we may assume that $i < k$. Let $M'$ be a simplification of $M \div X_{i}$, and let $x_{i}$ denote the element of $E(M') \cap X_{i}$. Let $(R',B')$ be the $2$-colouring of $M'$ given by $R' = (R \del X_{i}) \cup \{x_{i}\}$ and $B' = B \del X_{i}$, and let $\ell' = |R'|$. Then $\ell' \leq \ell+1$, and $(M', R', B')$ is not $(k-i+1, \ell')$-degenerate. Furthermore, by Lemma \ref{truncation_lemma}, $(X_{i+1} \del X_{i}, \dots, X_{k} \del X_{i})$ is an optimal $(k+1-i)$-stratification of $M'|B'$. 

It remains to consider two cases.

{\flushleft \textbf{Case 1.} $|B|-|X_{i}| < n_{k-i+1}(\ell')$. }

Since $(M, R, B)$ is not $(k,\ell)$-degenerate, there is a hyperplane $H$ of $M'$ disjoint from $R'$. By Proposition \ref{helpful}(3), $H$ is a monochromatic-blue rank-$(k-i+1)$ flat in $M$ which is skew to $X_{i}$. Since $M|X_{i} = (M \con H)|X_{i}$, it follows by (\ref{Xi_hyp}) that $M \con H$ has at least $C_{1}|B|^{i-1}$ hyperplanes. Now, because each element of $R$ is spanned by at most $|B|^{i-2}$ hyperplanes of $M|X_{i}$ in $M \con H$, we have that
\begin{align*}
    |\cH^{b}| \geq C_{1}|B|^{i-1} - \ell |B|^{i-2} \geq \frac{C_{1}}{2} |B|^{i-1},
\end{align*}
provided that $|B|$ is sufficiently large as a function of $k$ and $\ell$. The result now follows because $|B|-|X_{j}| < n_{k+1-i}(\ell')$ for each $j \in \{i, \dots, k\}$.

{\flushleft \textbf{Case 2.} $|B|-|X_{i}| \geq n_{k-i+1}(\ell')$.}

By induction, $M'$ has $\rho_{k-i+1}(\ell') \prod_{j=i}^{k}(|B|-|X_{j}|)$ hyperplanes which are disjoint from $R'$. Let $\cH_{\div}$ denote this collection of hyperplanes. By Proposition \ref{helpful}(3), each hyperplane in $\cH_{\div}$ is a monochromatic-blue rank-$(k-i+1)$ flat in $M$ which is skew to $X_{i}$. By Theorem \ref{aggregate}, the average size of a hyperplane in $\cH_{\div}$ at most $C_{2}$, where $C_{2}>0$ is a constant depending only on $k$. Therefore, letting $\cH_{\div}' = \{H \in \cH_{\div} : |H| \leq 2C_{2} \}$, we have by Markov's Inequality that $|\cH_{\div}'| \geq \frac{1}{2}|\cH_{\div}|$. 

For $H \in \cH_{\div}'$, let $\cF_{H}$ denote the collection of monochromatic-blue hyperplanes of $M$ spanned by $H$ and a hyperplane of $M|X_{i}$. Now $M|X_{i} = (M \con H)|X_{i}$, and each element of $R$ is spanned by at most $|B|^{i-2}$ hyperplanes of $M|X_{i}$ in $M \con H$. Hence, by (\ref{Xi_hyp}),  
\begin{equation}\label{H_hyp}
    |\cF_{H}| \geq C_{1} |B|^{i-1} - \ell |B|^{i-2} \geq \frac{C_{1}}{2} |B|^{i-1},
\end{equation}
provided that $|B|$ is sufficiently large as a function of $k$ and $\ell$. Now observe that
\begin{align*}
    \frac{1}{|\mathcal{F}_{H}|} \sum_{F \in \mathcal{F}_{H}} |F \setminus X_{i}| \leq
    \frac{1}{|\mathcal{F}_{H}|} (|B|^{i-1} + |\mathcal{F}_{H}||H| ) \leq \frac{2}{C_{1}} + 2C_{2}.
\end{align*}
Let $\cF_{H}' = \{F \in \cF_{H} : |F \del X_{i}| \leq 4/C_{1} + 4C_{2} \}$. By Markov's Inequality and (\ref{H_hyp}), $|\cF_{H}'| \geq \frac{C_{1}}{4}|B|^{i-1}$. Now, because every flat of $\cH_{\div}$ is skew to $X_{i}$ in $M$, each hyperplane in $\cF_{H}'$ contains at most $(4/C_{1} + 4C_{2})^{k+1-i}$ flats in $\cH_{\div}$. Therefore
\begin{align*}
    \frac{C_{1}}{8}|B|^{i-1}|\cH_{\div}| \leq \frac{C_{1}}{4} |B|^{i-1} |\cH_{\div}'| \leq \sum_{H \in \cH_{\div}'} \sum_{F \in \cF_{H}'} 1 \leq \Big( \frac{4}{C_{1}} + 4C_{2} \Big)^{k+1-i}|\cH^{b}|.
\end{align*}
The result now follows, because $|\cH_{\div}| \geq \rho_{k-i+1}(\ell')\prod_{j=i}^{k}(|B|-|X_{j}|)$.
\end{proof}

\section*{Acknowledgements}

We thank Jim Geelen for helpful discussions on the topic of this paper. The second author is grateful to IBS DIMAG for hosting him as a visitor in January and July-August 2024, during which time much of the research for this paper was conducted.

\bibliographystyle{plain}
\bibliography{CKL}

\end{document}